\documentclass[12pt]{amsart}

\usepackage{ucs}

\usepackage{amssymb}
\usepackage{amsthm}
\usepackage{amsmath}
\usepackage{latexsym}
\usepackage[cp1251]{inputenc}
\usepackage{graphicx}
\usepackage{wrapfig}
\usepackage{caption}
\usepackage{subcaption}
\usepackage{indentfirst}
\usepackage[left=2.6cm,right=2.6cm,top=2.6cm,bottom=2.6cm,bindingoffset=0cm]{geometry}
\usepackage{enumerate}
\usepackage{makecell}

\DeclareMathOperator{\aut}{Aut}

\DeclareMathOperator{\cay}{Cay}
\DeclareMathOperator{\cyc}{Cyc}

\DeclareMathOperator{\id}{id}

\DeclareMathOperator{\orb}{Orb}

\DeclareMathOperator{\rk}{rk}

\DeclareMathOperator{\Span}{Span}

\DeclareMathOperator{\Syl}{Syl}
\DeclareMathOperator{\sym}{Sym}
\DeclareMathOperator{\rad}{rad}

\DeclareMathOperator{\GCD}{GCD}

\DeclareMathOperator{\Hol}{Hol}

\makeatletter 
\def\@seccntformat#1{\csname the#1\endcsname. } 
\def\@biblabel#1{#1.}

\makeatother

\title{On schurity of dihedral groups}

\author{Grigory Ryabov}
\address{School of Mathematical Sciences, Hebei Key Laboratory of Computational Mathematics and Applications, Hebei Normal University, Shijiazhuang 050024, P. R. China}

\email{gric2ryabov@gmail.com}

\thanks{The author was supported by the grant of The Natural Science Foundation of Hebei Province (project No.~A2023205045)}

\date{}

\newtheorem{prop}{Proposition}[section]

\newtheorem{lemm}[prop]{Lemma}
\newtheorem{theo}[prop]{Theorem}

\newtheorem{corl}[prop]{Corollary}

\newtheorem*{rem1}{Remark~1}

\def\tm#1{\item[{\rm (#1)}]}

\begin{document}

\vspace{\baselineskip}
\vspace{\baselineskip}

\vspace{\baselineskip}

\vspace{\baselineskip}

\begin{abstract}
A finite group $G$ is called a \emph{Schur} group if every $S$-ring over $G$ is \emph{schurian}, i.e. associated in a natural way with a subgroup of $\sym(G)$ that contains all right translations. One of the crucial questions in the $S$-ring theory is the question on schurity of nonabelian groups, in particular, on existence of an infinite family of nonabelian Schur groups. In this paper, we study schurity of dihedral groups. We show that any generalized dihedral Schur group is dihedral and obtain necessary conditions of schurity for dihedral groups. Further, we prove that a dihedral group of order~$2p$, where $p$ is a Fermat prime or prime of the form $p=4q+1$, where $q$ is also prime, is Schur. Towards this result, we prove nonexistence of a difference set in a cyclic group of order~$p\neq 13$ and classify all $S$-rings over some dihedral groups.

\noindent\textbf{Keywords}: Schur rings, Schur groups, difference sets, dihedral groups.

\noindent\textbf{MSC}: 05E30, 05B10, 20B25. 
\end{abstract}

\maketitle

\section{Introduction}

A \emph{Schur ring} or \emph{$S$-ring} over a finite group $G$ can be defined as a subring of the group ring $\mathbb{Z}G$ that is a free $\mathbb{Z}$-module spanned by a partition of $G$ closed under taking inverse and containing the identity element $e$ of $G$ as a class. The theory of $S$-rings was initiated by Schur~\cite{Schur} and developed later by Wielandt~\cite{Wi}. Schur and Wielandt used $S$-rings to study permutation groups containing regular subgroups. To the present moment, $S$-rings have become a useful tool for studying permutation groups and Cayley graphs, in particular, the Cayley graph isomorphism problem. On the other hand, the theory of $S$-rings continues to develop itself. Concerning the latter one, we refer the readers to the survey~\cite{MP0}.

One of the keynote properties of $S$-rings is \emph{schurity}. If $K$ is a subgroup of the symmetric group $\sym(G)$ containing the group $G_r$ of all right translations, then the partition of $G$ into the orbits of the stabilizer~$K_e$ defines an $S$-ring over~$G$. As Wielandt wrote~\cite{Wi2}, Schur had conjectured that every $S$-ring can be constructed in such way from a suitable permutation group. However, this conjecture was disproved by Wielandt~\cite{Wi}. Later, $S$-rings arising from permutation groups were called \emph{schurian}, whereas groups whose all $S$-rings are schurian were called \emph{Schur} groups~\cite{Po}. The schurity property for $S$-rings is closely related to the isomorphism poblem for Cayley graphs (see~\cite{CP,MP0}). The problem of determining of all Schur groups was posed by P\"{o}schel~\cite{Po}. In general, this problem seems hard. One of the reasons for this is that to verify schurity of a group $G$, it is needed to describe all $S$-rings over $G$, but the number of $S$-rings over $G$ is exponential in the order of $G$.

Most of the results on Schur groups are concerned with the abelian case. They can be found in~\cite{EKP1,EKP2,GNP,KP,MP,PR,Po,Ry2,Ry4}. In fact, these results imply a complete classification of abelian Schur groups. However, since the general theory of $S$-rings over nonabelian groups is almost undeveloped, there are only few results on schurity of nonabelian groups. Necessary conditions of schurity for nonabelian groups were obtained in~\cite{PV}. In~\cite{Ry3}, it was shown that every nilpotent Schur group belongs to one of the explicitly given families of groups. Nonschurity of some nonabelian groups was checked in~\cite{MP,Ry1}. There are several examples of nonabelian Schur groups. The largest known nonabelian Schur group of order~$63$ was found by computer calculations~\cite{Ziv}.

It seems reasonable and interesting to study schurity of different classes of nonabelian groups. In the present paper, we deal with the class of dihedral groups. On the one hand, studying schurity of dihedral groups looks as the most natural step after the abelian case towards the general one. On the other hand, the schurity problem for dihedral groups is closely related to another important problem of the algebraic combinatorics, namely, to the problem of existence of a difference set in a cyclic group. This connection will be discussed further. Apparently, there are only two papers~\cite{MP} and~\cite{PV} which are concerned with schurity of dihedral groups. In~\cite{MP}, all $S$-rings of rank at most~$5$ over dihedral $2$-groups were classified and it was shown that only $S$-rings associated with divisible difference sets can be nonschurian among them, whereas in ~\cite{PV}, it was proved that a dihedral group of order~$2p$, where $p$ is a prime, is not Schur whenever $p\equiv 3\mod~4$ and $p>11$.

Recall that a nonabelian group is said to be \emph{generalized dihedral} if it is a semidirect product of an abelian group and a cyclic group of order~$2$, where the nontrivial element of the latter one inverses every element of the former one. The first main result of the paper is the theorem below.

\begin{theo}\label{main1}
Every generalized dihedral Schur group is dihedral.
\end{theo}

The next result of the paper provides restrictions on the order of a dihedral Schur group. The set of prime divisors of an integer $n$ is denoted by $\pi(n)$.

\begin{theo}\label{main2}
Let $n\geq 3$ be a positive integer. Suppose that a dihedral group of order~$2n$ is Schur. Then $|\pi(n)|\leq 3$. Moreover, 
\begin{enumerate}

\tm{1} if $|\pi(n)|=2$, then $n$ is a product of a prime and an odd prime power or $n=12$,

\tm{2} if $|\pi(n)|=3$, then $n$ is a product of three primes.  

\end{enumerate}
\end{theo}

To prove Theorems~\ref{main1} and~\ref{main2}, we construct new families of nonschurian $S$-rings and prove that some known nonschurian $S$-rings are isomorphic to $S$-rings over dihedral groups. Some of the constructions use difference sets in cyclic groups (see, e.g, Proposition~\ref{pdiff}). More specific restrictions on the order of a dihedral Schur group are given in Corollary~\ref{conditions}. Due to computational results~\cite{Ziv}, all dihedral groups of order at most~$62$ satisfying the conditions from Theorem~\ref{main2} and Corollary~\ref{conditions} are Schur except for the groups $D_{52}$ and $D_{60}$ which are not Schur and discussed in detail in Section~9.1.

As it was mentioned before, the order of the largest known nonabelian Schur group is~$63$. This leads us to the question on existence of an infinite family of nonabelian Schur groups which was posed for the first time by Olshansky at the conference ``Algebra and combinatorics'' (Ekaterinburg, 2013). Attempts to find the required family meet some problems which are hard themselves in general, e.g. the problems of existence of a (partial, divisible) difference set, computation of cyclotomic numbers, description of primitive nonschurian $S$-rings etc. One more of the main results of this paper is the following theorem which is the first result stating an existence of a potentially infinite family of nonabelian Schur groups.

\begin{theo}\label{main3}
Let $p$ be a prime. If $p$ is a Fermat prime or $p=4q+1$, where $q$ is a prime, then a dihedral group of order~$2p$ is Schur.
\end{theo}

Towards the proof of Theorem~\ref{main3}, we classify all $S$-rings over a dihedral group of order~$2p$, where $p$ is a Fermat prime, i.e. a prime of the form $p=2^k+1$, where $k\geq 1$, or $p=rq+1$ for a prime $q$ and $r\in\{2,4\}$ (Theorem~\ref{classification}). Recall that primes of the form~$2q+1$, where $q$ is also prime, are known as \emph{safe} primes (see, e.g.,~\cite{Rib}). One of the keynote cases in the proof of Theorem~\ref{classification} is the case when Cayley graphs corresponding to some basic sets of an $S$-ring are isomorphic to a Paley graph. To handle this case, we use a recent result of Brouwer and Martin~\cite{BM} on triple intersection numbers of a Paley graph (see the proof of Lemma~\ref{4q1}).

It is not known whether there are infinitely many Fermat primes or primes of the form $4q+1$, where $q$ is also prime. Only five Fermat primes are known; the largest of them is~$65537$. The question whether a number of primes of the form $4q+1$ is infinite is closely related to the famous and widely-believed Dickson, Bateman-Horn, and generalized Hardy-Littlewood conjectures (see~\cite{BH,Di0,GT2,JZ,Rib}). Namely, if one of these conjectures is true, then there are infinitely many primes of the form~$4q+1$. Computational results on primes~$q$ such that $4q+1$ is also prime can be found on the web-page~\cite{oeis}.

One of the crucial questions which appears during the studying of schurity of dihedral groups is the question on existence of a difference set in a cyclic group (see~\cite{MP,PV}). A subset $D$ of a group $G$ is called a \emph{difference set} if there is a nonnegative integer $\lambda$ such that every nonidentity element $g\in G$ has exactly $\lambda$ representations in the form~$g=d_1^{-1}d_2$, $d_1,d_2\in D$. Difference sets were introduced by Singer~\cite{Sin} and systematic studying of difference sets was started with the papers of Bruck~\cite{Bruck} and Hall~\cite{Hall}. For details on difference sets, we refer the readers to~\cite[Chapter~VI]{BJL}. The keynote ingredient in the proof of Theorem~\ref{main1} is the following theorem.

\begin{theo}\label{main4}
Let $p$ be a prime. If $p$ is a Fermat prime or $p=4q+1$, where $q>3$ is a prime, then there is no a nontrivial difference set in a cyclic group of order~$p$.
\end{theo}

Note that Theorem~\ref{main2} for a Fermat prime~$p$ immediately follows from a simple counting argument, whereas the proof of this theorem for $p=4q+1$ requires some technique of working with multipliers of difference sets. Namely, in the latter case a group $M$ of multipliers of a nontrivial difference set in a cyclic group $C_p$ of order~$p$ is a subgroup of $\aut(C_p)\cong C_{4q}$. We consider all possibilities for the order of $M$. To obtain a contradiction, we use relations between cyclotomic numbers from the classical paper of Dickson~\cite{Di} and a recent result of Gordon and Schmidt~\cite{GS} that guarantees an existence of a nontrivial multiplier. The condition $q>3$ in Theorem~\ref{main2} is essential because there is a nontrivial difference set in a cyclic group of order~$13$ (see Remark~$1$).

We finish the introduction with a brief outline of the paper. Section~$2$ contains a necessary background of $S$-rings. In Section~$3$, we provide basic definitions and facts concerned with difference sets and prove Theorem~\ref{main4}. The proof of Theorem~\ref{main1} is given in Section~$4$. Section~$5$ contains a construction of a nonschurian $S$-ring over a group of order~$2pn$, where $p$ is a prime and $n\geq 3$, which is used in the proof of Theorem~\ref{main2}. In Section~$6$, we provide several restrictions on the structure of a dihedral Schur group and prove Theorem~\ref{main2}. In Section~$7$, we classify all $S$-rings over a dihedral group of order~$2p$, where $p$ is a Fermat prime or $p=rq+1$ for a prime $q$ and $r\in\{2,4\}$, and prove Theorem~\ref{main3}. Section~$8$ contains some further remarks, in particular, on $S$-rings from relative difference sets and their connection with $2$-arc transitive dihedrants and necessary conditions of schurity for Frobenius groups. Throughout the paper, cyclic, dihedral, and elementary abelian groups of order~$n$ are denoted by $C_n$, $D_n$, and $E_n$, respectively.

The author is very grateful to Dr. Matan Ziv-Av for the help with computer calculations and Prof. Ilia Ponomarenko for the valuable comments on the text.

\section{$S$-rings}

In this section, we follow~\cite{MP} in general. Let $G$ be a finite group and $\mathbb{Z}G$ the integer group ring. The identity element and the set of all non-identity elements of $G$ are denoted by~$e$ and $G^\#$, respectively. The symmetric group of the set $G$ is denoted by~$\sym(G)$. If $K\leq \sym(G)$ and $g\in G$, then the set of all orbits of $K$ on $G$ and the one-point stabilizer of $g$ in $K$ are denoted by $\orb(K,G)$ and $K_g$, respectively. For a set $\Delta\subseteq \sym(G)$ and a section $S=B/A$ of $G$, we set 
$$\Delta^S=\{f^S:~f\in \Delta,~S^f=S\},$$
where $S^f=S$ means that $f$ permutes the $A$-cosets in $B$ and $f^S$ denotes the bijection of $S$ induced by $f$.

Given $g\in G$, the permutation of $G$ induced by the right multiplication on~$g$ is denoted by~$g_r$. The subgroup of $\sym(G)$ induced by the right multiplications of $G$ is denoted by $G_{r}$. The holomorph $G_r\rtimes \aut(G)$ of $G$ is denoted by $\Hol(G)$. If $X\subseteq G$, then the element $\sum \limits_{x\in X} {x}$ of the group ring $\mathbb{Z}G$ is denoted by~$\underline{X}$. The set $\{x^{-1}:x\in X\}$ is denoted by $X^{-1}$.

A subring  $\mathcal{A}\subseteq \mathbb{Z} G$ is called an \emph{$S$-ring} (a \emph{Schur} ring) over $G$ if there exists a partition $\mathcal{S}=\mathcal{S}(\mathcal{A})$ of~$G$ such that:

$(1)$ $\{e\}\in\mathcal{S}$;

$(2)$  if $X\in\mathcal{S}$, then $X^{-1}\in\mathcal{S}$;

$(3)$ $\mathcal{A}=\Span_{\mathbb{Z}}\{\underline{X}:\ X\in\mathcal{S}\}$.

\noindent The elements of $\mathcal{S}$ are called the \emph{basic sets} of $\mathcal{A}$. The number of basic sets is called the \emph{rank} of $\mathcal{A}$ and denoted by~$\rk(\mathcal{A})$. Clearly, $\mathbb{Z}G$ is an $S$-ring. The partition $\{\{e\},G\setminus\{e\}\}$ defines the $S$-ring $\mathcal{T}_G$ of rank~$2$ over~$G$. It is easy to check that if $X,Y\in \mathcal{S}(\mathcal{A})$, then $XY\in \mathcal{S}(\mathcal{A})$ whenever $|X|=1$ or $|Y|=1$. The $S$-ring $\mathcal{A}$ is called \emph{symmetric} if $X=X^{-1}$ for every $X\in \mathcal{S}(\mathcal{A})$.

A set $T \subseteq G$ is called an \emph{$\mathcal{A}$-set} if $\underline{T}\in \mathcal{A}$. If $T$ is an $\mathcal{A}$-set, then put $\mathcal{S}(\mathcal{A})_T=\{X\in \mathcal{S}(\mathcal{A}):~X\subseteq T\}$. A subgroup $A \leq G$ is called an \emph{$\mathcal{A}$-subgroup} if $A$ is an $\mathcal{A}$-set. The $S$-ring $\mathcal{A}$ is called \emph{primitive} if there are no nontrivial proper $\mathcal{A}$-subgroups of $G$ and \emph{imprimitive} otherwise. One can verify that for every $\mathcal{A}$-set $X$, the groups $\langle X \rangle$ and $\rad(X)=\{g\in G:\ gX=Xg=X\}$ are $\mathcal{A}$-subgroups.

Let $\{e\}\leq A \unlhd B\leq G$. A section $B/A$ is called an \emph{$\mathcal{A}$-section} if $B$ and $A$ are $\mathcal{A}$-subgroups. If $S=B/A$ is an $\mathcal{A}$-section, then the module
$$\mathcal{A}_S=Span_{\mathbb{Z}}\left\{\underline{X}^{\pi}:~X\in\mathcal{S}(\mathcal{A}),~X\subseteq B\right\},$$
where $\pi:B\rightarrow B/A$ is the canonical epimorphism, is an $S$-ring over $S$.

Let $X,Y\in\mathcal{S}$. If $Z\in \mathcal{S}$, then the number of distinct representations of $z\in Z$ in the form $z=xy$ with $x\in X$ and $y\in Y$ does not depend on the choice of $z\in Z$. Denote this number by $c^Z_{XY}$. One can see that $\underline{X}\cdot\underline{Y}=\sum \limits_{Z\in \mathcal{S}(\mathcal{A})}c^Z_{XY}\underline{Z}$. Therefore the numbers  $c^Z_{XY}$ are the structure constants of $\mathcal{A}$ with respect to the basis $\{\underline{X}:\ X\in\mathcal{S}\}$. The $S$-ring $\mathcal{A}$ is called \emph{commutative} if $c_{XY}^Z=c_{YX}^Z$ for all $X,Y,Z\in \mathcal{S}(\mathcal{A})$. If $\mathcal{A}$ is symmetric, then $\mathcal{A}$ is commutative. The equalities 
\begin{equation}\label{triangle}
|Z|c^{Z^{-1}}_{XY}=|X|c^{X^{-1}}_{YZ}=|Y|c^{Y^{-1}}_{ZX},
\end{equation}

\begin{equation}\label{sum10}
\sum \limits_{Y\in \mathcal{S}(\mathcal{A})} c_{XY}^Z=|X|,
\end{equation}
hold for all $X,Y,Z\in \mathcal{S}(\mathcal{A})$ (see~\cite[Proposition~2.1.12]{CP}).

Let $\mathcal{A}$ and $\mathcal{A}^\prime$ be $S$-rings over groups $G$ and $G^\prime$, respectively. A bijection $\varphi$ from $\mathcal{S}(\mathcal{A})$ to $\mathcal{S}(\mathcal{A}^\prime)$ is called an \emph{algebraic isomorphism} from $\mathcal{A}$ to $\mathcal{A}^\prime$ if $c_{X^\varphi Y^\varphi}^{Z^\varphi}=c_{XY}^Z$ for all $X,Y,Z\in \mathcal{S}(\mathcal{A})$. An algebraic isomorphism from $\mathcal{A}$ to itself is called an \emph{algebraic automorphism} of $\mathcal{A}$. The following statement is known as the first Schur theorem on multipliers (see~\cite[Theorem~23.9, (a)]{Wi}).

\begin{lemm} \label{burn}
Let $\mathcal{A}$ be an $S$-ring over an abelian group  $G$. Then the mapping $X\mapsto X^{(m)}=\{x^m:~x\in X\}$, $X\in \mathcal{S}(\mathcal{A})$, is an algebraic automorphism of $\mathcal{A}$ for every  $m\in \mathbb{Z}$ coprime to~$|G|$.
\end{lemm}

A bijection $f$ from $G$ to $G^\prime$ is called a (\emph{combinatorial}) \emph{isomorphism} from $\mathcal{A}$ to $\mathcal{A}^\prime$ if for every $X\in \mathcal{S}(\mathcal{A})$ there is $X^{\prime}\in \mathcal{S}(\mathcal{A}^\prime)$ such that 
$$\{(x^f,y^f)\in G\times G:~yx^{-1}\in X\}=\{(x^\prime,y^\prime):~y^\prime(x^\prime)^{-1}\in X^\prime\}.$$
If there is an isomorphism from $\mathcal{A}$ to $\mathcal{A}^\prime$, we say that $\mathcal{A}$ and $\mathcal{A}^\prime$ are \emph{isomorphic}.

A bijection $f\in\sym(G)$ is defined to be a \emph{(combinatorial) automorphism} of $\mathcal{A}$ if for all $x,y\in G$, the basic sets containing the elements $yx^{-1}$ and $y^f(x^{-1})^f$ coincide. The set of all automorphisms of $\mathcal{A}$ forms a group called the \emph{automorphism group} of $\mathcal{A}$ and denoted by $\aut(\mathcal{A})$. One can see that $\aut(\mathcal{A})\geq G_r$ and $\mathcal{A}$ is isomorphic to an $S$-ring over a group~$H$ if and only if $\aut(\mathcal{A})$ has a regular subgroup isomorphic to~$H$. The $S$-ring $\mathcal{A}$ is said to be \emph{normal} if $G_r$ is normal in $\aut(\mathcal{A})$. If $f\in \aut(\mathcal{A})$, then 
$$(Xy)^f=Xy^f$$
for all $X\in \mathcal{S}(\mathcal{A})$ and $y\in G$. If $A$ is an $\mathcal{A}$-subgroup of $G$, then the set of all right $A$-cosets is an imprimitivity system of~$\aut(\mathcal{A})$.

A group isomorphism $\alpha$ from $G$ to $G^\prime$ is defined to be a \emph{Cayley isomorphism} from $\mathcal{A}$ to $\mathcal{A}^\prime$ if $X^\alpha\in \mathcal{S}(\mathcal{A}^\prime)$ for every $X\in \mathcal{S}(\mathcal{A})$. A group automorphism $\alpha\in \aut(G)$ is defined to be a \emph{Cayley automorphism} of $\mathcal{A}$ if $X^\alpha=X$ for every $X\in \mathcal{S}(\mathcal{A})$. In this case, $\alpha \in \aut(\mathcal{A})$.

Let $K$ be a subgroup of $\sym(G)$ containing $G_{r}$. The $\mathbb{Z}$-submodule
$$V(K,G)=\Span_{\mathbb{Z}}\{\underline{X}:~X\in \orb(K_e,~G)\}$$
is an $S$-ring over $G$ as it was proved by Schur~\cite{Schur}. An $S$-ring $\mathcal{A}$ over  $G$ is called \emph{schurian} if $\mathcal{A}=V(K,G)$ for some $K\leq \sym(G)$ with $K\geq G_{r}$. One can verify that $\mathcal{A}$ is schurian if and only if
$$\mathcal{A}=V(\aut(\mathcal{A}),G).$$
It is easy to see that $\mathcal{T}_G=V(\sym(G),G)$ and hence $\mathcal{T}_G$ is schurian. Clearly, two isomorphic $S$-rings are schurian or not simultaneously. If $\mathcal{A}$ is a schurian $S$-ring and $S$ is an $\mathcal{A}$-section, then 
$$\mathcal{A}_S=V(\aut(\mathcal{A})^S,S)$$
and hence $\mathcal{A}_S$ is also schurian. A finite group~$G$ is said to be \emph{Schur} if every $S$-ring over $G$ is schurian. The above discussion implies the following lemma.

\begin{lemm}\label{schursection}
A section of a Schur group is Schur. 
\end{lemm}


Let $K \leq \aut(G)$. The set $\orb(K,G)$ forms a partition of $G$ that defines an $S$-ring $\mathcal{A}$ over~$G$. In this case,  $\mathcal{A}$ is called \emph{cyclotomic} and denoted by $\cyc(K,G)$. It is easy to see that $\cyc(K,G)=V(G_rK,G)$ and hence every cyclotomic $S$-ring is schurian. The first statement of the next lemma follows from the results of~\cite{Po} (see also~\cite[Lemma~2.4]{PR}), whereas the second one from Lemma~\ref{burn}.

\begin{lemm}\label{cyclprime}
Let $\mathcal{A}$ be an $S$-ring over a group of prime order. Then
\begin{enumerate}
\tm{1} $\mathcal{A}$ is cyclotomic and hence schurian; 

\tm{2} all nontrivial basic sets of $\mathcal{A}$ are of the same size.
\end{enumerate}
\end{lemm}

Let $\mathcal{A}$ be an $S$-ring over a group $G$ and $K\leq\aut(G)$ a group of Cayley isomorphisms from $\mathcal{A}$ to itself. Then due to~\cite[Lemma~2.3.26]{CP}, the module $\mathcal{A}^K=\Span_{\mathbb{Z}}\{\underline{X}^K:~X\in \mathcal{S}(\mathcal{A})\}$, where 
$$X^K=\bigcup \limits_{\alpha\in K} X^\alpha,$$
is an $S$-ring over $G$.

Let $S=B/A$ be an $\mathcal{A}$-section of $G$. The $S$-ring~$\mathcal{A}$ is called the \emph{$S$-wreath product} or \emph{generalized wreath product} of $\mathcal{A}_B$ and $\mathcal{A}_{G/A}$ if $A\trianglelefteq G$ and every basic set $X$ of $\mathcal{A}$ outside~$B$ is a union of some $A$-cosets or, equivalently, $A\leq \rad(X)$ for every $X\in \mathcal{S}(\mathcal{A})_{G\setminus B}$. In this case, we write $\mathcal{A}=\mathcal{A}_B \wr_S \mathcal{A}_{G/A}$. The $S$-wreath product is called \emph{nontrivial} or \emph{proper} if $A\neq \{e\}$ and $B\neq G$. If $A=B$, then the $S$-wreath product coincides with the \emph{wreath product} $\mathcal{A}_A\wr \mathcal{A}_{G/A}$ of $\mathcal{A}_A$ and $\mathcal{A}_{G/A}$. 

Given a section $S=B/A$ of a group $G$ such that $A\trianglelefteq G$ and $S$-rings $\mathcal{A}_1$ and $\mathcal{A}_2$ over $B$ and $G/A$, respectively, such that $S$ is both an $\mathcal{A}_1$- and an $\mathcal{A}_2$-section, and $(\mathcal{A}_1)_S=(\mathcal{A}_2)_S$, there is a unique $S$-ring $\mathcal{A}$ over $G$ that is the $S$-wreath product with $\mathcal{A}_B=\mathcal{A}_1$ and $\mathcal{A}_{G/A}=\mathcal{A}_2$ (see~\cite{EP0}).

Let $A$ and $B$ be subgroups of $G$ such that $G=A\times B$ and $\mathcal{A}_1$ and $\mathcal{A}_2$  $S$-rings over $A$ and $B$, respectively. Then the partition 
$$\{X_1\times X_2:~X_1\in\mathcal{S}(\mathcal{A}_1),~X_2\in \mathcal{S}(\mathcal{A}_2)\}.$$
of $G$ defines the $S$-ring $\mathcal{A}$ over $G$ called the \emph{tensor product} of $\mathcal{A}_1$ and $\mathcal{A}_2$ and denoted by $\mathcal{A}_1\otimes \mathcal{A}_2$. The tensor product is called \emph{nontrivial} if $\{e\}<A<G$ and $\{e\}<B<G$.

\section{Difference sets}

A subset $D$ of $G$ is called a \emph{difference set} in $G$ if 
$$\underline{D}\cdot\underline{D}^{-1}=ke+\lambda \underline{G}^\#,$$
where $k=|D|$ and $\lambda$ is a positive integer. The numbers $(v,k,\lambda)$, where $v=|G|$, are called the \emph{parameters} of $D$. It is easy to check that $G\setminus D$ is a difference set with parameters $(v,v-k,v-2k+\lambda)$. The empty set, the set~$G$, and every subset of $G$ of size~$1$ or~$|G|-1$ are difference sets. The difference set $D$ is called \emph{nontrivial} if $2\leq|D|\leq |G|-2$. A simple counting argument implies that
\begin{equation}\label{diff}
k(k-1)=(v-1)\lambda.
\end{equation}
For the general theory of difference sets, we refer the readers to~\cite{Bau} and~\cite[Chapter~VI]{BJL}. The lemma below provides two families of difference sets in cyclic groups. In the present paper, only an existence of such difference sets in cyclic groups is essential for us. A detailed description of the constructions can be found, e.g., in~\cite[Chapter~VI]{BJL}.

\begin{lemm}\label{diffcycl}
Let $A$ be a cyclic group of order~$n$.
\begin{enumerate}

\tm{1} If $n$ is a prime such that $n\equiv 3\bmod 4$, then $A$ has a Paley difference set with parameters~$(n,\frac{n-1}{2},\frac{n-3}{4})$.

\tm{2} If $n=\frac{q^{d+1}-1}{q-1}$ for some prime power~$q$ and positive integer~$d\geq 2$, then $A$ has a Singer difference set with parameters $(\frac{q^{d+1}-1}{q-1},\frac{q^{d}-1}{q-1},\frac{q^{d-1}-1}{q-1})$.

\end{enumerate}
\end{lemm}

\begin{lemm}\label{dif2q}
Let $p$ be a prime of the form $p=2q+1$, where $q$ is an odd prime, and $D$ a nontrivial difference set in~$C_p$. Then $D$ has  parameters $(2q+1,q,\frac{q-1}{2})$ or $(2q+1,q+1,\frac{q+1}{2})$.
\end{lemm}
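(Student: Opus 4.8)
The plan is to pin down the parameters $(v,k,\lambda)$ of a nontrivial difference set in the cyclic group $C_p$ with $p=2q+1$ using only the basic counting identity~\eqref{diff}, together with the standard complementation symmetry. First I would set $v=p=2q+1$ and write~\eqref{diff} as $k(k-1)=2q\lambda$. Replacing $D$ by its complement $G\setminus D$ if necessary (which has parameters $(v,v-k,v-2k+\lambda)$), I may assume $k\le (v-1)/2=q$; since $D$ is nontrivial, $2\le k\le q$. So it suffices to show that under these constraints the only solution is $k=q$, $\lambda=\frac{q-1}{2}$, which then yields the companion parameter set $(2q+1,q+1,\frac{q+1}{2})$ for the complement.

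The key step is a divisibility argument. From $k(k-1)=2q\lambda$ and the fact that $q$ is prime, $q$ divides $k(k-1)$, hence $q\mid k$ or $q\mid k-1$. With $2\le k\le q$, the case $q\mid k$ forces $k=q$; the case $q\mid k-1$ forces $k-1=0$ or $k-1=q$, i.e. $k=1$ (excluded) or $k=q+1$ (excluded by $k\le q$). Thus $k=q$. Substituting back, $q(q-1)=2q\lambda$, so $\lambda=\frac{q-1}{2}$, which is an integer precisely because $q$ is odd. This gives the Paley parameters $(2q+1,q,\frac{q-1}{2})$; complementing gives $(2q+1,q+1,\frac{q+1}{2})$, and these are the only two possibilities.

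I do not expect any serious obstacle here: the argument is entirely elementary, resting on~\eqref{diff}, the primality of $q$, and the complementation trick. The only point requiring a moment of care is the reduction to $k\le q$ — one must note that the complement of a nontrivial difference set is again a nontrivial difference set (so nothing is lost), and that $k=q$ and $k=q+1$ are genuinely distinct and both realizable as parameter sets, which is why the statement lists two parameter triples rather than one. It is worth remarking that this lemma only constrains the \emph{parameters}; the actual (non)existence of such a difference set in $C_p$ is a separate and harder question, addressed elsewhere via multiplier techniques.
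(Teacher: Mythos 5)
Your proof is correct and follows essentially the same route as the paper: both rest on Eq.~\eqref{diff} in the form $k(k-1)=2q\lambda$ and the primality of $q$ forcing $q\mid k$ or $q\mid k-1$, whence $k\in\{q,q+1\}$. The only difference is your preliminary complementation to reduce to $k\le q$, which the paper skips by handling both values of $k$ directly from the nontriviality bound $2\le k\le 2q-1$; this is a harmless stylistic variation.
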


\begin{proof}
Eq.~\eqref{diff} implies that $k(k-1)=2q\lambda$. Therefore $k$ or $k-1$ is divisible by~$q$. Since $D$ is nontrivial, we obtain that $k\in\{q,q+1\}$. Now the required follows from Eq.~\eqref{diff}.
\end{proof}

Note that $p\equiv 3\bmod 4$ in Lemma~\ref{dif2q} because $q$ is odd.

Let $G$ be abelian. Following~\cite{GS}, we say that an integer $t$ coprime to~$v$ is a (\emph{numerical}) \emph{multiplier} of $D$ if $D^{(t)}=\{d^t:~d\in D\}=Dg=\{dg:~d\in D\}$ for some $g\in G$. The set of automorphisms $\sigma_t:g\rightarrow g^t$, $g\in G$, where $t$ runs over all multipliers of $D$, forms a subgroup of $\aut(G)$ denoted by $M(D)$. Further for convenience, we will use the term ``multiplier'' to call the both integer~$t$ and automorphism $\sigma_t$. For more information on multipliers of difference sets, we refer the readers to~\cite{GS}.

\begin{lemm}\cite[Theorem~3.3]{Bau}\label{minus1}
The integer~$-1$ is never a multiplier of a nontrivial difference set in a cyclic group.
\end{lemm}

\begin{proof}[Proof of Theorem~\ref{main4}]
Let $A$ be a cyclic group of order~$p$ and $a$ a generator of~$A$. Assume that there is a nontrivial difference set $D$ in~$A$. Let $(p,k,\lambda)$ be the parameters of $D$. We may assume that $k<\frac{p}{2}$. Indeed, for otherwise $A\setminus D$ is a nontrivial difference set with $|A\setminus D|<\frac{p}{2}$ and we replace $D$ by $A\setminus D$. If $p$ is a Fermat prime, then $p-1$ is a $2$-power. Since $k$ and $k-1$ are coprime, Eq.~\eqref{diff} yields that $k=p$ or $k=p-1$, a contradiction to nontriviality of~$D$.

Now let $p=4q+1$, where $q>3$ is a prime.

\begin{lemm}\label{parameters}
In the above notations, $D$ has parameters $(4q+1,q,\frac{q-1}{4})$ or $(4q+1,q+1,\frac{q+1}{4})$. 
\end{lemm}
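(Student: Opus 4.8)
The plan is to read off the admissible values of $k$ from the counting relation \eqref{diff}, which here becomes $k(k-1)=(p-1)\lambda=4q\lambda$, and then to recover $\lambda$ in each surviving case directly from this equation.

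First I would exploit that $\gcd(k,k-1)=1$. Since $q$ is prime and $q\mid k(k-1)$, it divides exactly one of the factors $k$, $k-1$. By nontriviality of $D$ we have $k\ge 2$, and the normalization $k<\tfrac{p}{2}=2q+\tfrac12$ gives $k\le 2q$. Hence the only multiples of $q$ that can occur are $k=q$ and $k=2q$ (when $q\mid k$), and $k-1=q$, i.e. $k=q+1$ (when $q\mid k-1$; here $k-1=0$ is excluded since $k\ge 2$, and $k-1=2q$ is excluded since $k\le 2q$). Thus $k\in\{q,q+1,2q\}$.

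Next I would discard $k=2q$: substituting into $k(k-1)=4q\lambda$ forces $\lambda=\tfrac{2q-1}{2}$, which is not an integer because $2q-1$ is odd, contradicting that $\lambda$ is a positive integer. So $k\in\{q,q+1\}$, and feeding each value back into \eqref{diff} gives $\lambda=\tfrac{q-1}{4}$ when $k=q$ and $\lambda=\tfrac{q+1}{4}$ when $k=q+1$; integrality of $\lambda$ additionally forces $q\equiv 1\pmod 4$ in the first case and $q\equiv 3\pmod 4$ in the second, so (as $q$ is odd) exactly one of the two parameter triples actually occurs, according to the residue of $q$ modulo $4$. This is the asserted conclusion.

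I do not expect a genuine obstacle here: the argument is a short piece of divisibility bookkeeping. The only spot deserving a little care is converting the strict inequality $k<\tfrac{p}{2}$ into the integer bound $k\le 2q$ and enumerating the multiples of $q$ in $[2,2q]$ carefully enough that the parasitic case $k=2q$ is noticed and then eliminated by the parity of $2q-1$.
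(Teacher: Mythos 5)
Your argument is correct and coincides with the paper's proof: both derive $k\in\{q,q+1,2q\}$ from $k(k-1)=4q\lambda$ together with nontriviality and the normalization $k<\tfrac{p}{2}$, eliminate $k=2q$ by the parity of $2q-1$, and then read off $\lambda$. Your closing remark that integrality of $\lambda$ selects exactly one of the two parameter triples according to $q\bmod 4$ is a correct (though unneeded) refinement not made in the paper.
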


\begin{proof}
Eq.~\eqref{diff} implies that $k(k-1)=4q\lambda$. Therefore $k$ or $k-1$ is divisible by~$q$. Since $D$ is nontrivial and $k<\frac{p}{2}$, we conclude that
$$k\in\{q,q+1,2q\}.$$
If $k=2q$, then $2q-1=2\lambda$ by Eq.~\eqref{diff}. We obtain a contradiction because $2q-1$ is odd, whereas $2\lambda$ is even. Thus, $k=q$ or $k=q+1$. Now from Eq.~\eqref{diff} it follows that $\lambda=\frac{q-1}{4}$ in the former case and $\lambda=\frac{q+1}{4}$ in the latter one.
\end{proof}

Let $M=M(D)$. Clearly, $M\leq \aut(A)\cong C_{p-1}=C_{4q}$ and hence $|M|$ divides $4q$. Suppose that $|M|$ is even. Then $M$ contains an automorphism of order~$2$, i.e. $\sigma_0:a\rightarrow a^{-1}\in \aut(A)$. However, this contradicts to Lemma~\ref{minus1}. Therefore, $|M|$ is odd and hence 
$$|M|\in\{1,q\}.$$

Suppose that $|M|=1$, i.e. $M$ is trivial. Let $t$ be a positive integer such that $t<p$ and the automorphism $\sigma_t:a\rightarrow a^t$ of $A$ is of order~$q$. We are going to prove that $t$ is a nontrivial multiplier of $D$ and by that to obtain a contradiction. Let $n=k-\lambda$. From Lemma~\ref{parameters} it follows that $n>1$ and $n<p$. The latter yields that $\GCD(p,n)=1$. Let $u$ be a prime divisor of~$n$. There exists an automorphism $\sigma_u$ of $A$ such that $\sigma_u:a\rightarrow a^u$ because $u\leq n<p$. If $|\sigma_u|$ is even, then the group $\langle \sigma_u \rangle$ contains $\sigma_0:a\rightarrow a^{-1}$ and hence $u^i\equiv -1\bmod p$ for some integer $i$. Therefore $u$ is self-conjugate modulo~$p$ in the sense of~\cite[p.~7]{GS}. Suppose that $|\sigma_u|$ is odd. Then $|\sigma_u|=q$ because $|\sigma_u|$ divides $|\aut(A)|=p-1=4q$. Together with $|\sigma_t|=q$, this implies that $t\equiv u^i\bmod p$ for some integer $i$. Finally, observe that  
$$\frac{n}{\GCD(p,n)}=n=k-\lambda>\lambda$$
because $\lambda<\frac{k}{2}$ (Lemma~\ref{parameters}). Thus, all conditions of~\cite[Theorem~3.1]{GS} hold for $n_1=n$ and~$t$ and hence $t$ is a multiplier of~$D$.

Now suppose that $|M|=q$. From~\cite[VI.Theorem~2.6]{BJL} it follows that there exists a translate of $D$ which is $M$-invariant. Each translate of $D$ is also a difference set in $A$ with the same parameters as~$D$ because $A$ is abelian. So without loss of generality, we may assume that $D$ is $M$-invariant. This means that $D$ is a union of some orbits of $M$. Each non-identity orbit of $M$ is a cyclotomic class of order~$4$ (see~\cite[p.~357]{BJL} for the definition). Recall that $|D|=k\in\{q,q+1\}$ by Lemma~\ref{parameters}. If $|D|=q$, then $D$ is a non-identity orbit of $M$; if $|D|=q+1$, then $D$ is a union of a non-identity orbit of $M$ and $\{e\}$.

The number $p$ can be presented in the form
\begin{equation}\label{prime} 
p=x^2+4y^2, 
\end{equation}
where $x$ and $y$ are integers, by~\cite[Eq.~(51)]{Di}. Due to~\cite[Eq.~(55)]{Di}, we have 
\begin{equation}\label{x}
x=2q-1-8(1,0)_4,
\end{equation}
where $(1,0)_4$ is a cyclotomic number modulo~$4$ (see~\cite{Di} for the definition).

If $|D|=q$, then $(1,0)_4=\frac{q-1}{4}$ by~\cite[VI.Theorem~8.10(a)]{BJL}. Together with Eq.~\eqref{x}, this implies that $x=2q-1-8(1,0)_4=2q-1-2q+2=1$. Therefore $p=4q+1=4y^2+1$ by Eq.~\eqref{prime} and hence $q=y^2$, a contradiction to primality of~$q$. If $|D|=q+1$, then $(1,0)_4=\frac{q+1}{4}$ by~\cite[VI.Theorem~8.10(b)]{BJL}. Together with Eq.~\eqref{x}, this implies that $x=2q-1-8(1,0)_4=2q-1-2q-2=-3$. Eq.~\eqref{prime} yields that $p=4q+1=4y^2+9$ and hence $q=y^2+2$. If $y^2\equiv 1\bmod 3$, then $q$ is divisible by~$3$. So $q=3$, a contradiction to the assumption of the theorem. If $y^2$ is divisible by~$3$, then $q\equiv 2\bmod 3$. Therefore $p=4q+1$ is divisible by~$3$, a contradiction to primality of~$p$.  
\end{proof}

\begin{rem1}
If $q=3$, then there is a difference set in~$A$ with parameters~$(4q+1,q+1,\frac{q+1}{4})=(13,4,1)$ which is a Singer difference set (see Lemma~\ref{diffcycl}(2)).
\end{rem1}

Let $G$ be a dihedral group, $A$ a cyclic subgroup of $G$ of index~$2$, and $b$ an involution from $G\setminus A$. Suppose that $D$ is a difference set in $A$. Then~\cite[Lemma~5.1]{PV} implies that the partition of $G$ into the sets
$$\{e\}, A^\#, Db, (A\setminus D)b$$
defines the $S$-ring $\mathcal{A}=\mathcal{A}(D)$. Clearly, $\rk(\mathcal{A}(D))=4$ and $\mathcal{A}(D)=\mathcal{A}(A\setminus D)$.

\begin{lemm}\label{autmult}
In the above notations, $\alpha^A$ is a multiplier of $D$ for every $\alpha\in\aut(\mathcal{A})$ such that $A^\alpha=A$ and $\alpha^A\in \aut(A)$.
\end{lemm}

\begin{proof}
Due to the definitions, we have $D^\alpha=(Dbb)^\alpha=Dbb^\alpha=Da$ for some $a\in A$ as required.
\end{proof}

\section{Proof of Theorem~\ref{main1}}

Let $G$ be a generalized dihedral Schur group associated with an abelian group~$A$. Due to Lemma~\ref{schursection}, the group $A$ is Schur. So $A$ is isomorphic to one of the groups from~\cite[Theorem~3.3]{PV}. Assume that $A$ is noncyclic. An inspection of the list of possible abelian groups~$A$ from~\cite[Theorem~3.3]{PV} implies that $G$ has a subgroup isomorphic to one of the following groups:
$$E_9\rtimes C_2,~C_2\times D_8,~E_4\times D_{2p},$$
where $p$ is an odd prime. To obtain a contradiction, it suffices to prove that the above groups are not Schur. The first two of them and the third one if $p=3$ are not Schur by computer calculations~\cite{Ziv} (see also~\cite[Lemma~3.1]{PV}). Further, we are going to modify a construction of a nonschurian $S$-ring over $D_8\times C_p$, $p\geq 5$, from~\cite[p.~1081]{Ry3} to obtain a nonschurian $S$-ring over $D_8\times C_p$ which is isomorphic to an $S$-ring over $E_4\times D_{2p}$. 

Let $H=\langle a,b:~a^4=b^2=e,a^b=a^{-1} \rangle\cong D_8$, $C$ a cyclic group of order~$p$, and $c$ a generator of~$C$. Put
$$Z_0=\{e\},~Z_1=\{a^2\},~Z_2=\{ab\},~Z_3=\{a^3b\},~Z_4=\{a,a^3,b,a^2b\},$$
$$X_1=c\{e,a^2\},~X_2=c\{ab,a^3b\},~X_3=c\{a,b\},~X_4=c\{a^3,a^2b\},$$
$$Y_1=c^{-1}\{e,a^2\},~Y_2=c^{-1}\{ab,a^3b\},~Y_3=c^{-1}\{a^3,b\},~Y_4=c^{-1}\{a,a^2b\},$$
$$T_{1k}=c^k\{e,a^2\},~T_{2k}=c^k\{ab,a^3b\},~T_{3k}=c^k\{a,a^3,b,a^2b\},~k\in\{2,\ldots,p-2\}.$$
Due to~\cite[Lemma~3.2]{Ry3}, the above sets form a partition of $H\times C$ that defines an $S$-ring $\mathcal{A}$ over $H\times C$. From~\cite[Lemma~3.4]{Ry3} it follows that $\mathcal{A}$ is nonschurian. Let $\varphi \in \aut(H\times C)$ such that $a^\varphi=a^{-1}$, $b^\varphi=b$, and $c^\varphi=c^{-1}$. An explicit verifying yields that $\varphi$ interchanges the sets $Z_2$ and $Z_3$, $X_i$ and $Y_i$, where $i\in\{1,2,3,4\}$, $T_{lk}$ and $T_{l,p-k}$, where $l\in \{1,2,3\}$ and $k\in\{2,\ldots,p-2\}$.

Let $\mathcal{B}=\mathcal{A}^{\langle \varphi \rangle}$. Put $R=\langle a^2_r,(ab)_r,c_r,a_r\varphi\rangle$. By the definition of $\mathcal{B}$, we have $R\leq \aut(\mathcal{B})$. A straightforward computation implies that $|a_r\varphi|=2$, the elements $a^2_r$, $(ab)_r$, and $a_r\varphi$ pairwise commute, and $c_r^{a_r\varphi}=c_r^{-1}$. Therefore 
$$R=\langle a^2_r,(ab)_r \rangle \times (C_r\rtimes \langle a_r\varphi\rangle) \cong E_4 \times D_{2p}.$$
One can see that the orbits of $\langle a^2_r,(ab)_r \rangle \times C_r$ are exactly the right cosets by a group $\langle a^2,ab \rangle \times C$ of order~$4p$ and $e^{a_r\varphi}=a^{-1}$. So we conclude that $R$ is transitive and hence regular. Thus, $\mathcal{B}$ is isomorphic to an $S$-ring over~$E_4\times D_{2p}$.

It remains to verify that $\mathcal{B}$ is nonschurian. Assume the contrary. Then the basic sets of $\mathcal{B}$ are the orbits of $K=\aut(\mathcal{B})_e$. Let $f\in K_{ac}$. Then $((X_4\cup Y_4)ac)^f=(X_4\cup Y_4)ac$, $((T_{12}\cup T_{1,p-2})ac)^f=(T_{12}\cup T_{1,p-2})ac$, $(T_{12}\cup T_{1,p-2})^f=T_{12}\cup T_{1,p-2}$, and $(T_{33}\cup T_{3,p-3})^f=T_{33}\cup T_{3,p-3}$. So
$$\{c^2\}^f=((X_4\cup Y_4)ac\cap (T_{12}\cup T_{1,p-2}))^f=(X_4\cup Y_4)ac\cap (T_{12}\cup T_{1,p-2})=\{c^2\}$$
and
$$\{ac^3,a^3c^3\}^f=((T_{12}\cup T_{1,p-2})ac\cap (T_{33}\cup T_{3,p-3}))^f=(T_{12}\cup T_{1,p-2})ac\cap (T_{33}\cup T_{3,p-3})=\{ac^3,a^3c^3\}.$$
The first of the above equalities implies that $((X_3\cup Y_3)c^2)^f=(X_3\cup Y_3)c^2$ and using also the second one, we obtain
$$\{ac^3\}^f=((X_3\cup Y_3)c^2\cap \{ac^3,a^3c^3\})^f=(X_3\cup Y_3)c^2\cap \{ac^3,a^3c^3\}=\{ac^3\}.$$
Thus, $f\in K_{ac^3}$ and consequently 
\begin{equation}\label{stab}
K_{ac}\leq K_{ac^3}. 
\end{equation}

On the one hand, $|K_{ac}|=|K|/|X_3\cup Y_3|=|K|/4$ because $X_3\cup Y_3$ is an orbit of~$K$ containing~$ac$. On the other hand, $|K_{ac^3}|=|K|/|T_{33}\cup T_{3,p-3}|=|K|/8$ because $T_{33}\cup T_{3,p-3}$ is an orbit of~$K$ containing~$ac^3$. Therefore $|K_{ac}|=|K|/4>|K|/8=|K_{ac^3}|$, a contradiction to Eq.~\eqref{stab}.

\section{Nonschurity of groups of order~$2pn$}
 
One of the crucial steps towards the proof of Theorem~\ref{main2} is the theorem below that is also of an independent interest. 

\begin{theo}\label{pn}
Let $p$ be a prime and $n>2$ an integer such that $2n$ divides $p-1$. Then the groups $C_p\times D_{2n}$ and $D_{2pn}$ are not Schur.
\end{theo}

To prove Theorem~\ref{pn}, we need some preliminary work. Let $p$ and $n$ be as in Theorem~\ref{pn}, $H=\langle a,b:~a^n=b^2=e,~a^b=a^{-1}\rangle\cong D_{2n}$, $A=\langle a \rangle$, $B=\langle b \rangle$, $C$ a cyclic group of order~$p$, and $G=H\times C$.

The group $\aut(C)$ is a cyclic group of order~$p-1$. Let $\sigma$ be a generator of $\aut(C)$. Since $p-1$ is divisible by~$n$, the group $\aut(C)$ has a unique subgroup~$M$ of index~$n$. Put $m=|M|=\frac{p-1}{n}$. It is easy to see that $|\orb(M,C^\#)|=n$ and $\aut(C)$ acts on the set $\orb(M,C^\#)$ as a regular cyclic group of order~$n$. Let us choose $C_0\in \orb(M,C^\#)$ and put $C_i=C_0^{\sigma^i}$, $i\in \mathbb{Z}_n$. Clearly, $|C_0|=\ldots=|C_{n-1}|=m$ and $\sigma$ induces the permutation $(C_0\ldots C_{n-1})$ on the set $\orb(M,C^\#)$. Since $\sigma$ is a generator of $\aut(C)$, we conclude that $C_{i+1}=C_{i}^{\sigma}=C_i^{(r)}$, where $r$ is a primitive root modulo~$p$. Note that $|M|=\frac{p-1}{n}$ is even because $2n$ divides $p-1$. This implies that $M$ contains an automorphism $\sigma_0$ of order~$2$ which inverses every element of $C$. Therefore $C_i=C_i^{-1}$ for every $i\in \mathbb{Z}_n$.

One can see that $\orb(M,C)=\{\{e\},C_0,\ldots,C_{n-1}\}$ is the set of the basic sets of the cyclotomic $S$-ring $\mathcal{C}=\cyc(M,C)$. Given $i,j,k\in \mathbb{Z}_n$, put $c_{i,j}^k=c_{C_iC_j}^{C_k}$ and $c_{i^{\sigma^l}, j^{\sigma^l}}^{k^{\sigma^l}}=c_{C_i^{\sigma^l} C_j^{\sigma^l}}^{C_k^{\sigma^l}}$. From Lemma~\ref{burn} it follows that $\sigma$ and hence $\sigma^l$ are algebraic automorphisms of $\mathcal{C}$. So
\begin{equation}\label{constants1}
c_{i+l,j+l}^{k+l}=c_{i^{\sigma^l},~j^{\sigma^l}}^{k^{\sigma^l}}=c_{i,j}^{k}
\end{equation}
for all $i,j,k,l\in \mathbb{Z}_n$.

\begin{lemm}\label{cproducts}
Let $i\in \mathbb{Z}_n$. Then
$$\sum \limits_{j\in \mathbb{Z}_n} \underline{C_j}\cdot\underline{C_{j+i}}=\delta_{i0}(p-1)e+(m-\delta_{i0})\underline{C}^\#,$$
where $\delta_{i0}$ is the Kronecker delta.
\end{lemm}

\begin{proof}
It follows that
\begin{equation}\label{sum1}
\sum \limits_{j\in \mathbb{Z}_n} \underline{C_j}\cdot\underline{C_{j+i}}=\sum \limits_{j\in \mathbb{Z}_n} \left(\delta_{i0}me+\sum \limits_{k\in \mathbb{Z}_n} c_{j,j+i}^k \underline{C_k}\right)=\delta_{i0}(p-1)e+\sum \limits_{k\in \mathbb{Z}_n} \Lambda_k \underline{C_k},
\end{equation}
where
$$\Lambda_k=\sum \limits_{j\in \mathbb{Z}_n} c_{j,j+i}^k.$$
By Eq.~\eqref{constants1}, we have $c_{j,j+i}^k=c_{j+l-k,j+i+l-k}^l$ for all $k,l\in \mathbb{Z}_n$. Since $j$ runs over the whole set $\mathbb{Z}_n$, the latter implies that $\Lambda_k=\Lambda_l$ for all $k,l\in \mathbb{Z}_n$. Together with Eq.~\eqref{sum1}, this yields that 
$$\sum \limits_{j\in \mathbb{Z}_n} \underline{C_j}\cdot\underline{C_{j+i}}=\delta_{i0}(p-1)e+\Lambda_0\underline{C}^\#.$$
The total number of elements in the left-hand side of the above equality is $n|C_0|^2=(p-1)m$ and hence $\Lambda_0=m-\delta_{i0}$ as required. 
\end{proof}

Let us construct a nonschurian $S$-ring over~$G$. Put

$$X_0=\{e\},~X_1=A^\#,~X_2=Ab=H\setminus A,$$
$$Y_1=C^\#,~Y_2=A^\# C^\#,$$
$$Z_1=\bigcup \limits_{i\in \mathbb{Z}_n} a^ib C_i,~Z_2=\bigcup \limits_{i\in \mathbb{Z}_n} a^ib (C^\#\setminus C_i).$$

The sets $X_0,X_1,X_2,Y_1,Y_2,Z_1,Z_2$ form a partition of $G$. Denote this partition by $\mathcal{S}$.

\begin{lemm}\label{sringcpd2n}
The $\mathbb{Z}$-module $\mathcal{A}=\Span_{\mathbb{Z}}\{\underline{T}:~T\in \mathcal{S}\}$ is an $S$-ring over $G$.
\end{lemm}

\begin{proof}
Observe that the sets $X_0,X_1,X_2,Y_1,Y_2,Z_1\cup Z_2$ are exactly the basic sets of the $S$-ring $\mathcal{B}=(\mathcal{T}_{A}\wr \mathcal{T}_{G/A})\otimes \mathcal{T}_C$. This implies that
$$\mathcal{A}=\Span_{\mathbb{Z}}\{\underline{T},\underline{Z_1}:~T\in \mathcal{S}(\mathcal{B})\}.$$ 
One can see that $Z_1=Z_1^{-1}$ and $Z_2=Z_2^{-1}$ because $C_i=C_i^{-1}$ for every $i\in \mathbb{Z}_n$. So to prove the lemma, it remains to verify that $\underline{Z_1}\cdot \underline{T},\underline{T}\cdot \underline{Z_1}\in \mathcal{A}$ for every $T\in \mathcal{S}$. Since $Y_2=X_1Y_1$ and $Z_2$ is a complement to the union of all basic sets distinct from~$Z_2$, it suffices to check the above inclusions only for $T\in\{X_1,X_2,Y_1,Z_1\}$. It is easy and straightforward that
$$\underline{Z_1}\cdot \underline{X_1}=\underline{X_1}\cdot \underline{Z_1}=\underline{Z_2},$$
$$\underline{Z_1}\cdot \underline{X_2}=\underline{X_2}\cdot \underline{Z_1}=\underline{Y_1}+\underline{Y_2},$$
and
$$\underline{Z_1}\cdot \underline{Y_1}=\underline{Y_1}\cdot \underline{Z_1}=m(\underline{X_2}+\underline{Z_2})+(m-1)\underline{Z_1}.$$
Finally, let us compute $\underline{Z_4}^2$:
$$\underline{Z_4}^2=\sum \limits_{i\in \mathbb{Z}_n} a^i\left(\sum \limits_{j\in \mathbb{Z}_n} \underline{C_j}\cdot\underline{C_{j+i}}\right)=(p-1)e+(m-1)\underline{C}^\#+m\sum \limits_{i\in \mathbb{Z}_n\setminus \{0\}} a^i\underline{C}^\#=(p-1)e+(m-1)\underline{Y_1}+m\underline{Y_2},$$
where the second equality holds by Lemma~\ref{cproducts}.
\end{proof}

\begin{lemm}\label{nonschur}
The $S$-ring  $\mathcal{A}$ is not schurian.
\end{lemm}

\begin{proof}
Assume the contrary that $\mathcal{A}$ is schurian. Then $\mathcal{S}=\orb(K,G)$, where $K=\aut(\mathcal{A})_e$. Let $i,j,k\in \mathbb{Z}_n$ be pairwise distinct, $c_j\in C_j$, and $c_k\in C_k$. Since $Z_2$ is an orbit of $K$ and $a^ibc_j,a^ibc_k\in Z_2$, there is $f\in K$ such that $(a^ibc_j)^f=a^ibc_k$. The group $C$ is an $\mathcal{A}$-subgroup and hence $Ca^ib=a^ibC$ is a block of $K$. One can see that $a^ibc_k=(a^ibc_j)^f\in a^ibC \cap (a^ibC)^f$. So $(a^ibC)^f=a^ibC$. Together with $X_2^f=X_2$, this implies that 
$$\{a^ib\}^f=(a^ibC\cap X_2)^f=(a^ibC)^f\cap X_2^f=a^ibC\cap X_2=\{a^ib\}.$$
Thus, $(a^ib)^f=a^ib$.

Due to the equalities $(a^ibc_j)^f=a^ibc_k$ and $(a^ib)^f=a^ib$ from the previous paragraph and $Y_1^f=Y_1$, we obtain
$$(C_ic_j)^f=(Z_1a^ibc_j\cap Y_1)^f=(Z_1a^ibc_j)^f\cap Y_1^f=Z_1a^ibc_k\cap Y_1=C_ic_k$$
and 
$$(C_i)^f=(Z_1a^ib\cap Y_1)^f=(Z_1a^ib)^f\cap Y_1^f=Z_1a^ib\cap Y_1=C_i.$$
The above equalities yield that $|C_i\cap C_ic_j|=|C_i\cap C_ic_k|$. Together with $C_i=C_i^{-1}$, the latter implies that
\begin{equation}\label{strucconst}
c_{ii}^j=c_{ii}^k.
\end{equation}

From Eq.~\eqref{strucconst} it follows that 
$$\underline{C_i}\cdot\underline{C_i}=me+\lambda \underline{C_i}+\mu (\underline{C}^\#-\underline{C_i})$$
for some positive integers $\lambda$ and $\mu$. The latter means that the partition of $C$ into subsets $\{e\}$, $C_i$, $C^\#\setminus C_i$ defines an $S$-ring of rank~$3$ over $C$. Since $n>2$, we obtain a contradiction to Lemma~\ref{cyclprime}(2). 
\end{proof}

\begin{lemm}\label{iso}
The $S$-ring $\mathcal{A}$ is isomorphic to an $S$-ring over $D_{2pn}$.
\end{lemm}

\begin{proof}
To prove the lemma, it suffices to find a regular subgroup isomorphic to $D_{2pn}$ in $\aut(\mathcal{A})$. Let $f=b_r(\id_H\times \sigma_0)\in \Hol(G)$ and $R=\langle (ac)_r, f \rangle$. Since each $C_i$ is inverse-closed, every basic set of $\mathcal{A}$ is invariant under the action of $\id_H\times \sigma_0$ and hence $\id_H\times \sigma_0\in \aut(\mathcal{A})$. Thus, $R\leq \aut(\mathcal{A})$. 

It can be verified straightforwardly that the permutation of $G$ induced by $f$ is a product of $pn$ independent cycles of length~$2$ of the form  
\begin{equation}\label{orbits}
\prod \limits_{x\in A\times C} (x~x^{f}),
\end{equation}
where given $x=a^ic^j\in A\times C$,
$$x^{f}=a^ibc^{-j}.$$
So the group $\langle f \rangle$ is a semiregular group of order~$2$. One can compute that $f^{-1}(ac)_rf=(a^{-1}c^{-1})_r$. Therefore $R=\langle (ac)_r \rangle \rtimes \langle f \rangle\cong D_{2pn}$ and hence $|R|=2pn=|G|$. In view of Eq.~\eqref{orbits}, each orbit of $\langle f \rangle$ contains a unique element from $A\times C$ whereas $\langle (ac)_r \rangle$ acts transitively on $A\times C$. Together with the previous discussion, this yields that $R$ is transitive and hence regular on $G$. Thus, $R$ is a regular subgroup of $\aut(\mathcal{A})$ isomorphic to $D_{2pn}$.
\end{proof}

Theorem~\ref{pn} immediately follows from Lemma~\ref{nonschur} and Lemma~\ref{iso}.

\begin{lemm}\label{isofrob}
If $\GCD(\frac{p-1}{2n},n)=1$, then $\mathcal{A}$ is isomorphic to an $S$-ring $C_p\rtimes C_{2n}$. 
\end{lemm}

\begin{proof}
To prove the lemma, we are going to find a regular subgroup of $\aut(\mathcal{A})$ isomorphic to $C_p\rtimes C_{2n}$. Let $\alpha\in \aut(H)\cong \aut(D_{2n})$ such that $a^{\alpha}=a$ and $b^{\alpha}=ab$. Observe that $|\alpha\times\sigma|=p-1$ because $|\sigma|=p-1$ and $|\alpha|=n$ divides $p-1$. Put $\beta=\alpha_1\times \sigma_1$, where $\alpha_1=\alpha^{\frac{p-1}{2n}}$ and $\sigma_1=\sigma^{\frac{p-1}{2n}}$. The definition of $\beta$ is correct because $2n$ divides~$p-1$. Clearly, $|\beta|=2n$. 

Let $f=b_r\beta\in \Hol(G)$ and $R=\langle c_r, f \rangle$. Since $\sigma$ induces the permutation $(C_0\ldots C_{n-1})$ on the set $\orb(M,C^\#)$, the set $Z_1$ is invariant under the action of $\alpha \times \sigma$ and consequently every basic set of $\mathcal{A}$ so is. This yields that $\alpha \times \sigma$ is a Cayley automorphism of $\mathcal{A}$ and hence $\beta$ so is. Therefore $R\leq \aut(\mathcal{A})$.

Note that $|\alpha_1|=n$ because $\GCD(\frac{p-1}{2n},n)=1$. So $b^{\alpha_1}=a_1b$, where $a_1$ is a generator of $A$. An explicit computation using the equality $b^{\alpha_1^i}=a_1^ib$ implies that the permutation of $G$ induced by $f$ is a product of $p$ independent cycles of length~$2n$ of the form
\begin{equation}\label{orbits2}
\prod \limits_{x\in C} (x~x^{f}~\ldots~x^{f^{2n-1}}),
\end{equation}
where
$$x^{f^l}=x^{\beta^l}a_1^{[\frac{l+1}{2}]}b^l,~l\in\{0,\ldots,2n-1\}.$$
So the group $\langle f \rangle$ is a semiregular group of order~$2n$. It is easy to verify that $f^{-1}c_rf=c^{\sigma_1}_r$ and hence $R=C_r\rtimes \langle f\rangle\cong C_p\rtimes C_{2n}$. In particular, $|R|=|2pn|=|G|$. Due to Eq.~\eqref{orbits2}, each of $p$ orbits of $\langle f \rangle$ contains an element from $C$ whereas obviously $C_r$ is transitive on $C$. This implies that $R$ is transitive on $G$. Thus, $R$ is a regular subgroup of $\aut(\mathcal{A})$ isomorphic to $C_p\rtimes C_{2n}$.
\end{proof}

\begin{corl}\label{frobp}
In the conditions of Lemma~\ref{isofrob}, the Frobenius group $C_p\rtimes C_{2n}$ is not Schur for every $n>2$. In particular, the Frobenius group $C_p\rtimes C_{p-1}$ is not Schur for every odd prime~$p\geq 7$.
\end{corl}

\section{Order of a dihedral Schur group}

In this section, we provide several restrictions on the order of a dihedral Schur group and, in particular, prove Theorem~\ref{main2}. Throughout this section, $n\geq 3$ is a positive integer, $G=\langle a,b:~a^n=b^2=e,~a^b=a^{-1} \rangle \cong D_{2n}$, $A=\langle a \rangle \cong C_n$, and $B=\langle b \rangle \cong C_2$. 

\begin{lemm}\label{symmetric}
Every symmetric $S$-ring over a cyclic group of even order is isomorphic to an $S$-ring over a dihedral group of the same order.
\end{lemm}

\begin{proof}
Let $\mathcal{A}$ be a symmetric $S$-ring over a group $H\cong C_{2n}$, $H_0$ a subgroup of $H$ of index~$2$, $x$ a generator of $H_0$, $y\in H\setminus H_0$, and $\sigma_0\in \aut(H)$ such that $h^{\sigma_0}=h^{-1}$, $h\in H$. Put $R=\langle x_r,y_r\sigma_0 \rangle$. Since $\mathcal{A}$ is symmetric, $\sigma_0\in \aut(\mathcal{A})$ and hence $R\leq \aut(\mathcal{A})$. To prove the lemma, it suffices to verify that $R\cong D_{2n}$ and $R$ is regular on $H$. A straightforward computation implies that $|y_r\sigma_0|=2$ and $(x_r)^{y_r\sigma_0}=x^{-1}_r$. Therefore $R=H_0 \rtimes \langle y_r\sigma_0\rangle \cong D_{2n}$. Clearly, the orbits of $(H_0)_r$ are $H_0$ and $H\setminus H_0$. One can see that $e^{y_r\sigma_0}=y^{-1}\in H\setminus H_0$. This yields that $R$ is transitive on $H$ and consequently regular as required. 
\end{proof}

\begin{corl}\label{dihedrestr}
If $G$ is Schur, then $2n$ is one of the forms $p^k$, $pq^k$, $2pq^k$, $pqr$, $2pqr$, where $p$, $q$, and $r$ are primes and $k\geq 1$.
\end{corl}

\begin{proof}
Assume the contrary. Then there exists a nonschurian symmetric $S$-ring $\mathcal{A}$ over a cyclic group of order~$2n$ by~\cite[Theorem~2.1]{EKP1}. Lemma~\ref{symmetric} implies that $\mathcal{A}$ is isomorphic to an $S$-ring over $G$ and hence $G$ is not Schur.
\end{proof}

The statement below follows from~\cite[Corollary~5.3]{PV} and Lemma~\ref{schursection}.

\begin{lemm}\label{3mod}
Suppose that $G$ is Schur and $p$ is a prime divisor of $n$ with $p \equiv 3 \bmod 4$. Then $p\in\{3,7,11\}$.
\end{lemm}

The next lemma is a corrected version of~\cite[Corollary~5.4]{PV}, where the case of a Paley difference set with parameters~$(11,5,2)$ is missed. In fact,~\cite[Corollary~5.4]{PV} states that a difference set in a cyclic subgroup of index~$2$ of a dihedral Schur group is isomorphic (up to taking a complement) to a difference set from~\cite[Statement~35]{Demb} which is a Singer difference set or a Paley difference set with parameters~$(11,5,2)$.

\begin{lemm}\label{schurdiff}
Suppose that $G$ is Schur and $A$ has a nontrivial difference set~$D$. Then $D$ is isomorphic (up to taking a complement) to a Paley difference set with parameters~$(11,5,2)$ or a Singer difference set with parameters~$(\frac{q^{d+1}-1}{q-1},\frac{q^{d}-1}{q-1},\frac{q^{d-1}-1}{q-1})$, where $q$ is a prime power and $d\geq 2$ is an integer. 
\end{lemm}

\begin{prop}\label{pdiff}
Suppose that $p$ is a prime divisor of~$n$ such that $C_p$ has a nontrivial difference set and $\frac{n}{p}\geq 3$. Then $G$ is not Schur.
\end{prop}

\begin{proof}
Clearly, $p$ is odd. In view of Lemma~\ref{schursection}, we may assume that $n=pq$, where $q$ is an odd prime, or $n=4p$. Let $P$ and $Q$ be  subgroups of~$A$ of order~$p$ and index~$p$, respectively, and $M\leq\aut(A)$ defined as follows. If $q=p$, then $M$ is a subgroup of $\aut(A)$ of order~$p-1$; otherwise 
$$M=\{(\sigma,\tau)\in \aut(P)\times \aut(Q):~(\sigma^{\pi_1})^\psi=\tau^{\pi_2}\},$$
where $\pi_1$ and $\pi_2$ are the canonical epimorphisms from $\aut(P)$ and $\aut(Q)$, respectively, to their quotients by the subgroups of index~$2$, and $\psi$ is a unique isomorphism between these quotients. Put $\mathcal{A}_1=\cyc(M,A)$. The definition of $\mathcal{A}_1$ implies that $(\mathcal{A}_1)_{S}=\mathcal{T}_{S}$, where $S=A/Q\cong C_p$. One can see that $\mathcal{A}_1$ is not a nontrivial generalized wreath or tensor product and hence $\mathcal{A}_1$ is normal by~\cite[Theorem~4.4.1]{CP}. 

Let $D$ be a difference set in $S\cong C_p$ and $\mathcal{A}_2=\mathcal{A}(D)$ (see Section~$3$ for the definition) an $S$-ring over $G/Q\cong D_{2p}$. Clearly, $(\mathcal{A}_2)_{S}=\mathcal{T}_{S}$. Put
$$\mathcal{A}=\mathcal{A}_1 \wr_S \mathcal{A}_2.$$
Note that $\mathcal{A}$ is well-defined because $(\mathcal{A}_1)_{S}=(\mathcal{A}_2)_{S}=\mathcal{T}_{S}$. To prove the proposition, it suffices to verify that $\mathcal{A}$ is nonschurian. Assume the contrary. According to~\cite[Corollary~5.7]{EP2}, $\mathcal{A}$ is schurian if and only if so are $\mathcal{A}_1$ and $\mathcal{A}_2$ and there exist groups $K_1\leq \aut(\mathcal{A}_1)$ and $K_2\leq \aut(\mathcal{A}_2)$ such that $K_1\geq A_r$, $K_2\geq (G/Q)_r$, $\mathcal{A}_1=V(K_1,A)$, $\mathcal{A}_2=V(K_2,G/Q)$, and
\begin{equation}\label{autrest}
K_1^S=K_2^S.
\end{equation} 
Clearly, in this case
\begin{equation}\label{autsect}
\mathcal{A}_S=V(K_1^S,S)=V(K_2^S,S).
\end{equation}
Since $\mathcal{A}_1$ is normal, $K_1\leq\aut(\mathcal{A}_1)\leq \Hol(G)$ by~\cite[Theorem~4.5]{EP1} and hence $K_1^S\leq \Hol(S)$. Due to Eq.~\eqref{autsect}, the set $S^\#$ is an orbit of the stabilizer of the identity element of $S$ in $K_1^S$. This yields that $|K_1^S|\geq |S|(|S|-1)=|\Hol(S)|$. Therefore 
\begin{equation}\label{authol}
K_1^S=\Hol(S). 
\end{equation}
Eqs.~\eqref{autrest} and~\eqref{authol} imply that $K_2^S=\Hol(S)$. In particular, there is $f\in K_2$ such that $S^f=S$ and $f^S$ inverses every element of~$S$. From Lemma~\ref{autmult} it follows that $f^S$ is a multiplier of $D$, a contradiction to Lemma~\ref{minus1}.
\end{proof}

\begin{corl}\label{pdiffcorl}
Suppose that $G$ is Schur and $p$ is a prime divisor of~$n$ such that $C_p$ has a nontrivial difference set~$D$. Then $n\in\{p,2p\}$ and $D$ is isomorphic (up to taking a complement) to a Paley difference set with parameters~$(11,5,2)$ or a Singer difference set with parameters~$(q^2+q+1,q+1,1)$, where $q$ is a prime power.
\end{corl}

\begin{proof}
Proposition~\ref{pdiff} implies that $n\in\{p,2p\}$, whereas Lemma~\ref{schurdiff} implies that $D$ is isomorphic (up to taking a complement) to a Paley difference set with parameters~$(11,5,2)$ or a Singer difference set with parameters~$(p,k,\lambda)=(\frac{q^{d+1}-1}{q-1},\frac{q^{d}-1}{q-1},\frac{q^{d-1}-1}{q-1})$, where $q$ is a prime power and $d\geq 2$. It remains to verify that $d=2$ in the latter case. If $d\geq 4$, then an explicit computation yields that
$$1+\sqrt{k}=1+\sqrt{\frac{q^{d+1}-1}{q-1}}>\frac{v-1}{k}=q$$
and hence the design corresponding to~$D$ is not $2$-transitive by~\cite[Theorem~8.3]{Kantor}. So the $S$-ring $\mathcal{A}(D)$ over $D_{2p}$ is nonschurian by~\cite[Lemma~5.2]{PV}. Therefore $D_{2p}$ and consequently $G$ are not Schur, a contradiction to the assumption of the corollary. Thus, $d\in \{2,3\}$. However, if $d=3$, then $p=(q^2+1)(q+1)$ is not prime, a contradiction. 
\end{proof}

\begin{prop}\label{d8}
Suppose that $G$ is Schur and $n$ is divisible by~$4$. Then $n$ is a power of~$2$ or $n=12$. 
\end{prop}

\begin{proof}
We are done if $n$ is a power of~$2$. Suppose that $n=2^i3^j$ for some $i\geq 2$ and $j\geq 1$. If $i\geq 3$ or $j\geq 2$, then $G$ has a subgroup isomorphic to $D_{48}$ or $D_{72}$, respectively. The group $D_{48}$ is not Schur by computer calculations~\cite{Ziv}, whereas the group $D_{72}$ is not Schur by Corollary~\ref{dihedrestr}. In both cases, we obtain a contradiction to Lemma~\ref{schursection}. Therefore $i=2$ and $j=1$ and hence $n=12$.

In view of the above paragraph, we may assume that $n$ has an odd prime divisor $p\geq 5$. Due to Lemma~\ref{schursection}, it suffices to verify that the group $D_{8p}$ is not Schur. If $p\equiv 3 \bmod 4$, then $p\in\{7,11\}$ by Lemma~\ref{3mod}. Lemma~\ref{diffcycl} implies that $C_p$ has a nontrivial difference set and $G$ is not Schur by Proposition~\ref{pdiff}. If $p-1$ is divisible by~$8$, then $D_{8p}$ is not Schur by Theorem~\ref{pn}. 

It remains to consider only the case when $p-1$ is divisible by~$4$ and not divisible by~$8$. In this case, $\frac{p-1}{4}$ is odd. We are going to prove that in this case a nonschurian $S$-ring from~\cite[p.~1083]{Ry3} over the group $Q_8 \times C_p$ is isomorphic to an $S$-ring over $D_{8p}$. Recall the construction. Let $H=\langle x,y:~x^4=e,x^2=y^2,x^y=x^{-1} \rangle \cong Q_8$, $C\cong C_p$, and $C_0$, $C_1$, $C_2$, $C_3$ the orbits of the subgroup $M$ of $\aut(C)\cong C_{p-1}$ of index~$4$ such that the cycle $(C_0 C_1 C_2 C_3)\in \sym(\orb(M,C^\#))$ is a generator of $\aut(C)$ acting on $\orb(M,C^\#)$. Since $\frac{p-1}{4}$ is odd, $C_0^{-1}=C_2$ and $C_1^{-1}=C_3$. Put
$$Z_0=\{e\},~Z_1=\{x^2\},~Z_2=\{xy,x^3y\},~Z_3=\{x,x^3,y,x^2y\},$$
$$Z_4=C^{\#},~Z_5=x^2C^{\#}$$
$$Z_6=\{x,x^3\}(C_1\cup C_3)\cup \{y,x^2y\}(C_0\cup C_2),$$
$$Z_7=xC_0\cup x^3C_2\cup yC_1\cup x^2yC_3,$$
$$Z_8=Z_7x^2=xC_2\cup x^3C_0\cup  yC_3\cup x^2yC_1,$$
$$Z_9=\{xy,x^3y\}C^\#.$$
Due to~\cite[Lemma~4.4]{Ry3}, the module $\mathcal{A}=\Span_{\mathbb{Z}}\{\underline{Z_i}:~i\in \{0,\ldots,9\}\}$ is an $S$-ring over $H\times C\cong Q_8 \times C_p$. From~\cite[Lemma~4.5]{Ry3} it follows that $\mathcal{A}$ is nonschurian. Let $\sigma\in \aut(H\times C)$ such that $x^\sigma=x^{-1}$, $y^\sigma=y^{-1}=x^2y$, and $\sigma^C$ inverses every element of $C$. It is easy to see that the sets $Z_i$, $i\in \{0,\ldots,9\}$, are invariant under action of $\sigma$ and hence $\sigma\in \aut(\mathcal{A})$. Put $R=\langle (xyc)_r,y_r\sigma \rangle$. A straightforward computation implies that $|y_r\sigma|=2$ and $(xyc)_r^{y_r\sigma}=(xyc)^{-1}_r$. Therefore 
$$R=\langle (xyc)_r \rangle \rtimes \langle y_r\sigma\rangle\cong D_{8p}.$$
Clearly, the orbits of $\langle (xyc)_r \rangle$ are $\langle xyc \rangle$ and $\langle xyc \rangle y$. Since $e^{y_r\sigma}=y^{-1}\in \langle xyc \rangle y$, we conclude that $R$ is transitive on $H\times C$ and consequently regular. Thus, $\mathcal{A}$ is isomorphic to an $S$-ring over $D_{8p}$ and hence $D_{8p}$ is not Schur as required.
\end{proof}

Theorem~\ref{main2} immediately follows from Corollary~\ref{dihedrestr} and Proposition~\ref{d8}.

For a convenience, we provide below a statement collecting arithmetic restrictions on the order of a dihedral Schur group that follow from the previous results of the paper and are not covered by Theorem~\ref{main2}.

\begin{corl}\label{conditions}
Let a dihedral group of order~$2n$, where $n\geq 3$, be Schur.
\begin{enumerate}

\tm{1} If $p\in \pi(n)$ such that $p \equiv 3~\bmod~4$, then $p=3$ or $n\in\{7,11,14,22\}$.

\tm{2} If $p,q\in \pi(n)$ are odd and $p<q$, then $q~\not\equiv 1\bmod~p$.

\end{enumerate}

\end{corl}

\begin{proof}
Statement~$(2)$ immediately follows from Lemma~\ref{schursection} and Theorem~\ref{pn}. Let us prove Statement~$(1)$. From Lemma~\ref{3mod} it follows that $p\in\{3,7,11\}$. We are done if $p=3$. Lemma~\ref{diffcycl} implies that $C_p$ has a nontrivial difference set if $p=7$ or $p=11$. So $\frac{n}{p}\leq 2$ by Proposition~\ref{pdiff} and hence $n\in\{7,11,14,22\}$.
\end{proof}

\section{Schurity of $D_{2p}$}

In this section, we keep the notations from the previous one. In addition, we assume that $n=p$ is an odd prime and hence $G\cong D_{2p}$ and $A\cong C_p$. We are going to classify $S$-rings over~$D_{2p}$ for some~$p$ and use this classification to prove Theorem~\ref{main3}.

\begin{theo}\label{classification}
Let $p$ be a prime. Suppose that $p$ is a Fermat prime or $p=rq+1$, where $q$ is a prime and $r\in\{2,4\}$. Then for every $S$-ring $\mathcal{A}$ over $D_{2p}$, one of the following statements holds: 
\begin{enumerate}

\tm{1} $\rk(\mathcal{A})=2$;

\tm{2} $\mathcal{A}$ is cyclotomic;

\tm{3} $\mathcal{A}$ is isomorphic to an $S$-ring over~$C_{2p}$;

\tm{4} $p\in \{13,2q+1\}$ and $\mathcal{A}=\mathcal{A}(D)$ for some nontrivial difference set $D$ in $A$.

\end{enumerate}

\end{theo}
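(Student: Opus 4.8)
The plan is to fix an $S$-ring $\mathcal{A}$ over $G=D_{2p}$ and run a structural analysis organized around the characteristic subgroup $A=\langle a\rangle\cong C_p$. First I would show that either $\rk(\mathcal{A})=2$ — statement (1) — or $A$ is an $\mathcal{A}$-subgroup. The idea: let $X$ be the basic set containing $b$; if $X\subseteq G\setminus A$, then a product of two reflections is a rotation, so $\underline{X}^2\in\mZ A$, whence the $\mathcal{A}$-subgroup $\langle XX\rangle$ lies in $A$ and equals $\{e\}$ or $A$; the first alternative forces $X=\{b\}$, and if every reflection were thin then the thin radical would contain all reflections, hence equal $G$, i.e. $\mathcal{A}=\mZ G$. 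The remaining possibility — that the basic set through $b$ also meets $A$ — is handled by the same circle of ideas (this is the first auxiliary lemma of the paper). So from now on $A$ is an $\mathcal{A}$-subgroup. By Lemma~\ref{cyclprime}, $\mathcal{A}_A=\cyc(M,A)$ for some $M\le\aut(A)\cong C_{p-1}$; put $m=|M|$, so the basic sets inside $A^\#$ are the $(p-1)/m$ orbits of $M$, while $\mathcal{A}_{G/A}$ has rank $2$ automatically. Writing the basic sets inside $R:=G\setminus A$ as $bD_1,\dots,bD_t$ with the $D_i\subseteq A$ forming a partition of $A$, the relations $(bd)^{-1}=bd$ and $(bd)(bd')=d^{-1}d'$ give the key constraints $\underline{bD_i}\,\underline{bD_j}=\underline{D_i^{-1}}\,\underline{D_j}\in\mathcal{A}_A$ for all $i,j$, which I would exploit throughout.

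Next I would dispose of the easy cases. If $t=1$ then $A\le\rad(R)$, so $\mathcal{A}=\mathcal{A}_A\wr\mathcal{A}_{G/A}$ — statement (4). If some reflection $bd_0$ is thin, then $\underline{\{bd_0\}}\,\underline{Y}=\underline{bd_0Y}$ is a basic set for every $Y\in\mathcal{S}(\mathcal{A}_A)$, so the basic sets on $R$ are exactly the sets $bd_0Y$; taking $K\le\aut(G)\cong\AGL(1,p)$ to be a suitable conjugate of the natural lift of $M$ fixing $bd_0$, one checks directly that $\mathcal{A}=\cyc(K,G)$ — statement (2). Hence I assume henceforth $t\ge 2$ and no reflection thin, so $|D_i|\ge 2$ for every $i$.

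Suppose first $\mathcal{A}_A$ has rank $2$, i.e. $m=p-1$ and $A^\#\in\mathcal{S}(\mathcal{A})$. Then $\underline{D_i^{-1}}\,\underline{D_i}=|D_i|\,\underline{\{e\}}+\tfrac{|D_i|(|D_i|-1)}{p-1}\underline{A^\#}$, so each $D_i$ is a nontrivial difference set in $A$, while for $i\ne j$ the relation $\underline{D_i^{-1}}\,\underline{D_j}=\tfrac{|D_i||D_j|}{p-1}\underline{A^\#}$ forces $(p-1)\mid|D_i||D_j|$. Combining the size restrictions on difference sets in $C_p$ (Lemmas~\ref{dif2q} and~\ref{parameters}) with $\sum_i|D_i|=p$ rules out $t\ge 3$; hence $t=2$ and $\mathcal{A}=\mathcal{A}(D)$ with $D:=D_1$ a nontrivial difference set, which by Theorem~\ref{main2} is possible only for $p\in\{13,2q+1\}$ — statement (5). (For the other $p$ allowed in the hypothesis this case is vacuous.)

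Finally suppose $\mathcal{A}_A$ has rank $\ge 3$, i.e. $m<p-1$. Here $N:=\sum_i\mZ\underline{D_i}\subseteq\mZ A$ is a module over $\mathcal{A}_A$; extending scalars, $\mQ\mathcal{A}_A\cong\mQ\times K$ where $K$ is the degree-$\tfrac{p-1}{m}$ subfield of $\mQ(\zeta_p)$ fixed by $M$, and since the augmentation of $N$ is nonzero and the complementary part of $\mQ N$ is a $K$-subspace of $\mQ(\zeta_p)$, one obtains $t=1+s\cdot\tfrac{p-1}{m}$ for some integer $1\le s\le m-1$. Coupling this with $t\le\lfloor p/2\rfloor$ (all $|D_i|\ge 2$) already eliminates $m\in\{1,2\}$. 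For the remaining divisors $m$ of $p-1$ — and there are only a handful, since $p-1$ is a power of $2$ or of the form $rq$ — I would analyze the relations $\underline{D_i^{-1}}\,\underline{D_j}\in\mathcal{A}_A$ through the cyclotomic numbers furnishing the structure constants of $\mathcal{A}_A$, again invoking Theorem~\ref{main2}, to conclude that either some reflection is after all thin (returning to statement (2)) or the partition $\{D_i\}$ is inversion-closed, in which case transplanting it to $C_{2p}$ yields an $S$-ring isomorphic to $\mathcal{A}$ — statement (3). The hard part is exactly this last step: when $p\equiv 1\pmod 4$ the symmetric part of $\mathcal{A}_A$ need not have rank $2$, so the blocks $D_i$ are no longer forced to be difference sets and the clean obstruction of the rank-$2$ case disappears; ruling out the ensuing exotic configurations and correctly recognizing the survivors as cyclotomic or as coming from $C_{2p}$ is where the finer cyclotomic-number bookkeeping, kept finite only by the special arithmetic of $p$, is indispensable.
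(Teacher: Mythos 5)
Your skeleton matches the paper's in outline (primitive/rank-2 case; $A$ an $\mathcal{A}$-subgroup with $\mathcal{A}_A=\cyc(M,A)$, $|M|=m$; the case $m=p-1$ reduced to difference sets and Theorem~\ref{main2}; small $m$ forcing the wreath product), and your module-theoretic count is a nice alternative device: projecting $N=\sum_i\mZ\,\underline{D_i}$ onto the two components of $\mQ\mathcal{A}_A\cong\mQ\times K$ does give $t=1+s\cdot\frac{p-1}{m}$ with $1\le s\le m-1$ when $t\ge 2$, and combined with $t\le (p-1)/2$ this kills $m\in\{1,2\}$. But there are two genuine gaps.

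First, your opening dichotomy ``$\rk(\mathcal{A})=2$ or $A$ is an $\mathcal{A}$-subgroup'' is false, and the missing branch is precisely where outcome (3) comes from. If the basic set through $b$ is a singleton (or, more generally, if there is an $\mathcal{A}$-subgroup $L$ of order $2$) while $A$ is \emph{not} an $\mathcal{A}$-subgroup, then every basic set outside $L$ is mixed, and one must exhibit a regular cyclic subgroup of $\aut(\mathcal{A})$ (the paper does this in Lemma~\ref{l1} via the automorphism $\sigma:(a,s)\mapsto(a^{-1},s)$ and the element $\sigma b_r$). Your proposal only treats ``thin reflection'' under the standing assumption that $A$ is already an $\mathcal{A}$-subgroup, and your handling of a \emph{mixed} basic set through $b$ is deferred to ``the same circle of ideas''; in fact that case rests on $D_{2p}$ being a strong B-group (Wielandt), which your elementary $\langle XX\rangle$ argument does not recover.

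Second, and more seriously, the core of the theorem is the elimination of $2<m<p-1$ (i.e. $m\in\{4,q,2q\}$), and here the proposal stops at a plan. Your congruence $t\equiv 1\bmod\frac{p-1}{m}$ is strictly weaker than the paper's key Lemma~\ref{l2} (for every prime power $r^s\mid m$, all but one basic set in $Ab$ has size divisible by $r^s$, the exceptional one being $\equiv 1\bmod r^s$); already for $m=4$ your count permits $t=1+\frac{p-1}{4}$ with small blocks, and for $m=q,2q$ it permits many configurations, so the promised ``cyclotomic-number bookkeeping'' is not optional --- it is the content of Lemmas~\ref{l2}, \ref{l3} and the structure-constant computation of Lemma~\ref{4q1}, none of which you supply. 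Moreover your predicted conclusion for this regime (outcome (2) or (3), via an inversion-closed partition transplanted to $C_{2p}$) is wrong: under your standing assumptions ($t\ge2$, no thin reflection, hence $A$ the unique $\mathcal{A}$-subgroup) the correct conclusion is that this case is vacuous, because the correct output is $Y=Ab$, i.e. the wreath product of outcome (4). That misprediction is a reliable sign that the decisive part of the argument has not actually been carried out.
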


It should be mentioned that every $S$-ring over $C_{2p}$ is of rank~$2$, or cyclotomic, or a wreath product of two $S$-rings by~\cite[Theorem~4.1-4.2]{EP2} (see also~\cite{LM1,LM2}). The proof of Theorem~\ref{classification} will be given in the end of the section. We say that a subset $X$ of $G$ is \emph{mixed} if $X\cap A\neq \varnothing$ and $X\cap Ab\neq \varnothing$; otherwise we say that $X$ is \emph{nonmixed}. Throughout this section, $\mathcal{A}$ is an $S$-ring over $G$. The next lemma immediately follows from the fact that $G$ is a strong B-group (see~\cite[Theorem~26.6(i)]{Wi}).

\begin{lemm}\label{l0}
If $\mathcal{A}$ is primitive, then $\rk(\mathcal{A})=2$.  
\end{lemm}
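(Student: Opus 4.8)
The plan is to deduce this from the classification of strong B-groups, so that the proof is essentially a citation. Recall that a finite group $G$ is a \emph{strong B-group} if the only primitive $S$-ring over $G$ is the one of rank~$2$; equivalently, every $S$-ring over $G$ with at least three basic sets has a proper nontrivial $\mathcal{A}$-subgroup. By \cite[Theorem~26.6(i)]{Wi}, the dihedral group $G=D_{2p}$ with $p$ an odd prime is a strong B-group. Hence, if $\mathcal{A}$ is a primitive $S$-ring over $G$, then $\rk(\mathcal{A})=2$, which is exactly the assertion. The only bookkeeping point is that the notion of primitivity used in \cite{Wi} (no proper nontrivial subgroup $H$ with $\underline{H}\in\mathcal{A}$) coincides verbatim with the one fixed in Section~$2$, so the cited theorem applies directly.

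To make the citation self-checking — and to record the step one would take toward a fully self-contained argument — I would first note that primitivity of $\mathcal{A}$ already forces every nontrivial basic set to be mixed. Indeed, if a basic set $X\neq\{e\}$ satisfied $X\subseteq A$, then $\langle X\rangle$ would be a proper nontrivial $\mathcal{A}$-subgroup contained in $A$, contradicting primitivity; thus $\{e\}$ is the only basic set inside $A$. And if $X\subseteq Ab$, then $X=X^{-1}$ because every element of $Ab$ is an involution, so $\underline{X}\,\underline{X^{-1}}$ is supported on $XX\subseteq A$; since the only basic set inside $A$ is $\{e\}$, this gives $\underline{X}\,\underline{X^{-1}}=|X|e$, whence comparing with $\sum_{Z}c^{Z}_{X,X^{-1}}|Z|=|X|^{2}$ via~\eqref{sum2} yields $|X|=1$, so that $\langle X\rangle\cong C_2$ is again a proper nontrivial $\mathcal{A}$-subgroup — a contradiction. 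So every basic set other than $\{e\}$ meets both $A$ and $Ab$.

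Completing the argument from here — ruling out two or more mixed basic sets by the structure-constant relations~\eqref{triangle}--\eqref{sum2} over the cyclic subgroup $A\cong C_p$ — is precisely the content packaged in \cite[Theorem~26.6(i)]{Wi}, and I would not reproduce it. The main (and only real) obstacle is therefore bibliographic rather than mathematical: one must be sure the cited theorem delivers the conclusion $\rk(\mathcal{A})=2$ for \emph{all} $S$-rings, i.e. that $D_{2p}$ is a \emph{strong} B-group and not merely a B-group (for which only $2$-transitivity of schurian overgroups would follow); \cite[Theorem~26.6(i)]{Wi} does give the strong form.
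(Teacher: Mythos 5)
Your proposal is correct and follows exactly the paper's route: the lemma is deduced directly from the fact that $D_{2p}$ is a strong B-group by~\cite[Theorem~26.6(i)]{Wi}. The additional observation that primitivity forces all nontrivial basic sets to be mixed is accurate but not needed, as the paper likewise treats the statement as an immediate consequence of the cited theorem.
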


Further we assume that $\mathcal{A}$ is imprimitive.

\begin{lemm}\label{l1}
If $G$ has an $\mathcal{A}$-subgroup of order~$2$, then $\mathcal{A}$ is cyclotomic or isomorphic to an $S$-ring over $C_{2p}$.
\end{lemm}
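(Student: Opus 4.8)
The plan is to split into two cases according to whether the cyclic subgroup $A$ of index~$2$ is an $\mathcal{A}$-subgroup. Since $L$ has order~$2$ it is generated by an involution $t$, and because $\{e\}\in\mathcal{S}(\mathcal{A})$ the singleton $\{t\}=L^{\#}$ is a basic set; write $t=a^{i}b$. It is convenient to record a subset $S\subseteq G$ by the pair $(S_{0},S_{1})$ with $S_{0}=\{j:a^{j}\in S\}$ and $S_{1}=\{j:a^{j}b\in S\}$; then $S^{-1}$ has parts $(-S_{0},S_{1})$, and conjugation $\tau$ by $t$ sends $S$ to the set with parts $(-S_{0},\,2i-S_{1})$, as a direct check shows. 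Throughout I will use freely that products of a basic set with a singleton basic set are again basic sets, that $\langle X\rangle$ is an $\mathcal{A}$-subgroup for every $\mathcal{A}$-set $X$, and that $\mathcal{A}_{A}$ is cyclotomic by Lemma~\ref{cyclprime}.

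\textbf{Case 1: $A$ is an $\mathcal{A}$-subgroup.} Here $\mathcal{A}_{A}=\cyc(K,A)$ for some $K\le\aut(A)$, so the basic sets of $\mathcal{A}$ inside $A^{\#}$ are exactly the nontrivial $K$-orbits. The coset $Ab=G\setminus A$ is an $\mathcal{A}$-set, so for each basic set $X\subseteq A^{\#}$ the set $Xt\subseteq Ab$ is a basic set; as $X$ ranges over the basic sets inside $A^{\#}$ the sets $Xt$ are pairwise disjoint with union $A^{\#}t=Ab\setminus\{t\}$, so together with $\{t\}$ they form precisely the basic sets of $\mathcal{A}$ inside $Ab$. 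I would then take $M$ to be the subgroup of $\aut(G)$ consisting of the automorphisms that fix $t$ and restrict on $A$ to a member of $K$; since restriction maps the stabiliser of $t$ in $\aut(G)$ isomorphically onto $\aut(A)$, one has $M\cong K$. A short computation with the explicit automorphisms of $D_{2p}$ then shows that the $M$-orbit of $a^{j}b$ with $j\ne i$ is exactly $Xt$, where $X$ is the $K$-orbit of $a^{j-i}$. Hence $\orb(M,G)=\mathcal{S}(\mathcal{A})$, so $\mathcal{A}=\cyc(M,G)$ is cyclotomic.

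\textbf{Case 2: $A$ is not an $\mathcal{A}$-subgroup,} equivalently, some basic set is \emph{mixed} (meets both $A$ and $Ab$). First I would argue that every basic set other than $\{e\}$ and $\{t\}$ is mixed: a basic set contained in $A^{\#}$ generates $A$ since $|A|=p$; and a basic set $Y\subseteq Ab$ with $Y\ne\{t\}$ satisfies $t\notin Y$ (because $\{t\}$ is already a basic set), so $Yt$ is a basic set contained in $A^{\#}$ — in both situations $A$ would be an $\mathcal{A}$-subgroup. It follows that the nonempty first parts $P$ of the mixed basic sets partition $\mathbb{Z}_{p}^{*}$, and their second parts $Q$ partition $\mathbb{Z}_{p}\setminus\{i\}$; in particular all these $P$'s, and all these $Q$'s, are nonempty and pairwise disjoint. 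Now if $X=(P,Q)$ is a mixed basic set, then $X^{-1}=(-P,Q)$ is a basic set with the same second part, so disjointness of the $Q$'s forces $X^{-1}=X$ and hence $P=-P$; then $\tau(X)=(-P,\,2i-Q)=(P,\,2i-Q)$ is a basic set with first part $P$, so disjointness of the $P$'s forces $\tau(X)=X$. Thus $\tau$ fixes every basic set, so the left translation $\ell\colon x\mapsto tx$ belongs to $\aut(\mathcal{A})$: it sends the pair $(x,y)$ to $(tx,ty)$, and $(ty)(tx)^{-1}=t(yx^{-1})t^{-1}=\tau(yx^{-1})$ lies in the same basic set as $yx^{-1}$. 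Since $\ell$ has order~$2$, commutes with the subgroup $A_{r}\le G_{r}\le\aut(\mathcal{A})$ of right translations by elements of $A$, and does not lie in $A_{r}$ (it interchanges the $A_{r}$-orbits $A$ and $Ab$), the group $\langle A_{r},\ell\rangle\cong C_{p}\times C_{2}\cong C_{2p}$ is transitive on $G$ of order $2p=|G|$, hence regular. Therefore $\aut(\mathcal{A})$ has a regular subgroup isomorphic to $C_{2p}$, so $\mathcal{A}$ is isomorphic to an $S$-ring over $C_{2p}$.

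The main obstacle is Case~2. The fact that $\{t\}$ is a singleton basic set makes $\tau$ permute $\mathcal{S}(\mathcal{A})$ automatically, but this alone is far from enough; the real work is to observe that the failure of $A$ to be an $\mathcal{A}$-subgroup forces \emph{all} nontrivial basic sets to be mixed, so that their first parts (and second parts) become pairwise disjoint — and it is exactly this disjointness that rigidifies $\tau$ into fixing every basic set, which in turn produces the cyclic regular subgroup. In Case~1 the only step requiring attention is the verification that the $M$-orbits on $Ab$ coincide with the sets $Xt$, which is a routine calculation once the automorphisms of $D_{2p}$ are written out explicitly.
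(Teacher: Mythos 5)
Your proof is correct and follows essentially the same route as the paper's: the same case split on whether $A$ is an $\mathcal{A}$-subgroup, the same observation that otherwise every nontrivial basic set besides $L^{\#}$ is mixed and inverse-closed in both coordinates, and the same construction of a regular subgroup of $\aut(\mathcal{A})$ isomorphic to $C_{2p}$. The only cosmetic difference is that the paper realises the extra automorphism as $\sigma b_r$ with $\sigma\colon a\mapsto a^{-1}$ fixing $s$, whereas you use left translation by $t$; these are the same permutation.
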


\begin{proof}
Let $L=\{e,s\}$ be an $\mathcal{A}$-subgroup. Then $\{s\}\in \mathcal{S}(\mathcal{A})$. Suppose that $A$ is an $\mathcal{A}$-subgroup. Lemma~\ref{cyclprime}(1) implies that $\mathcal{A}_A=\cyc(K_0,A)$ for some $K_0\leq \aut(A)$. Every basic set of $\mathcal{A}$ outside $A\cup L$ is of the form $Xs$, where $X\in \mathcal{S}(\mathcal{A}_A)=\orb(K_0,A)$. Therefore $\mathcal{A}=\cyc(K,G)$, where $K\leq \aut(G)$ is such that the restriction of $K$ on $A$ is equal to~$K_0$ and $K$ fixes $s$. Further we assume that $A$ is not an $\mathcal{A}$-subgroup.

Assume that there exists a nonmixed basic set $Y$ of $\mathcal{A}$ outside $L$. If $Y\subseteq A$, then $A=\langle Y\rangle$ is an $\mathcal{A}$-subgroup, a contradiction to the assumption. If $Y\subseteq Ab$, then $Ys$ is a basic set inside $A$ and $A=\langle Ys\rangle$ is an $\mathcal{A}$-subgroup, a contradiction to the assumption. 

Due to the above paragraph, we may assume that every basic set  $Y$ of $\mathcal{A}$ outside $L$ is mixed and hence of the form $Y=Y_0\cup Y_1s$, where $Y_0$ and $Y_1$ are nonempty subsets of $A$. Note that $Y_1s=(Y_1s)^{-1}$. So $Y=Y^{-1}$ and $Y_0=Y_0^{-1}$. Since $\{s\}\in \mathcal{S}(\mathcal{A})$, the set $Ys=Y_0s\cup Y_1$ is basic. Moreover, $Ys=(Ys)^{-1}$ and hence $Y_1=Y_1^{-1}$. Therefore $\sigma\in \aut(G)$ such that $a^\sigma=a^{-1}$ and $s^\sigma=s$ is a Cayley automorphism of $\mathcal{A}$. One can verify straightforwardly that $|\sigma s_r|=2$ and $a_r\sigma s_r=\sigma s_r a_r$. This implies that the group $R=\langle a_r, \sigma s_r \rangle$ is isomorphic to $C_{2p}$. It is easy to see that $R$ is transitive on $G$ and hence regular. Thus, $\aut(\mathcal{A})$ has a regular subgroup isomorphic to $C_{2p}$. This yields that $\mathcal{A}$ is isomorphic to an $S$-ring over $C_{2p}$. 
\end{proof}

Further we assume that 
$$A~\text{is a unique}~\mathcal{A}\text{-subgroup of}~G$$ 
In this case, $Ab$ is an $\mathcal{A}$-set. Due to Lemma~\ref{cyclprime}(1), we have $\mathcal{A}_A=\cyc(K,A)$ for some $K\leq \aut(A)\cong C_{p-1}$. Let $|K|=m$. Clearly, $m$ divides $p-1$ and every nontrivial basic set of $\mathcal{A}_A$ is of size~$m$. 

\begin{lemm}\label{easy}
If $Y\in \mathcal{S}(\mathcal{A})_{Ab}$, then $|Y|\notin \{1,p-1\}$.
\end{lemm}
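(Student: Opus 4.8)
The statement to prove: if $\mathcal{A}$ is an $S$-ring over $G\cong D_{2p}$ for which $A$ is the unique proper nontrivial $\mathcal{A}$-subgroup, and $Y\in\mathcal{S}(\mathcal{A})_{Ab}$, then $|Y|\notin\{1,p-1\}$. The plan is to rule out $|Y|=1$ and $|Y|=p-1$ separately, in each case producing a forbidden $\mathcal{A}$-subgroup, contradicting the standing assumption that $A$ is the only one.

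First I would dispose of $|Y|=1$. If $Y=\{ab^{\varepsilon}\}$ with $ab^{\varepsilon}\in Ab$ (so $\varepsilon=1$, an involution in $G\setminus A$), then $Y$ is a singleton basic set, hence $\langle Y\rangle=\langle ab^{\varepsilon}\rangle$ is an $\mathcal{A}$-subgroup. Since $ab^{\varepsilon}$ has order $2$, this is an $\mathcal{A}$-subgroup of order $2$, which is neither $\{e\}$ nor $A$ nor $G$ — contradicting the uniqueness assumption. (Equivalently, one could invoke Lemma~\ref{l1}, but the direct argument is cleaner: a singleton in $Ab$ directly yields an order-$2$ $\mathcal{A}$-subgroup.)

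Next, the case $|Y|=p-1$, which I expect to be the genuine content of the lemma. Here $Y\subseteq Ab$ and $|Ab|=p$, so $Ab\setminus Y=\{z\}$ is a single element, and $\{z\}$ is an $\mathcal{A}$-set (the complement of a basic set inside the $\mathcal{A}$-set $Ab$ need not itself be basic, but $\underline{Ab}-\underline{Y}\in\mathcal{A}$ forces $\underline{\{z\}}\in\mathcal{A}$, hence $\{z\}\in\mathcal{S}(\mathcal{A})$ since it is a one-element $\mathcal{A}$-set). But $z\in Ab$ is an involution, so again $\langle z\rangle$ is an $\mathcal{A}$-subgroup of order $2$, contradicting uniqueness of $A$. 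The only subtlety to check is that $\underline{Ab}\in\mathcal{A}$: this holds because $A$ is an $\mathcal{A}$-subgroup and $b_r$-translation — more precisely, $Ab$ is a coset of the $\mathcal{A}$-subgroup $A$, and in the wreath-free setup one observes $Ab = \bigcup\{X\in\mathcal{S}(\mathcal{A}): X\subseteq Ab\}$, which is stated in the text ("In this case $Ab$ is an $\mathcal{A}$-set") right before the lemma. So $\underline{Ab}-\underline{Y}$ is a $\mathbb{Z}$-combination of basic sets, and being supported on the single point $z$ it equals $\underline{\{z\}}$.

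The main obstacle, such as it is, is simply making sure the uniqueness hypothesis on $\mathcal{A}$-subgroups is correctly exploited: both cases reduce to "$Ab$ contains an $\mathcal{A}$-point, and any element of $Ab$ generates an order-$2$ subgroup," so the real work is just confirming that a one-element $\mathcal{A}$-set lying in $Ab$ arises in each case ($|Y|=1$: immediate; $|Y|=p-1$: from the complement in $Ab$). No structure-constant computation is needed; this is purely a generation-of-subgroups argument.
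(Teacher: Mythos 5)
Your proposal is correct and follows essentially the same route as the paper: in each case you exhibit a one-element $\mathcal{A}$-set inside $Ab$ (namely $Y$ itself, or $Ab\setminus Y$ using that $Ab$ is an $\mathcal{A}$-set), whose generator is an involution and hence yields an $\mathcal{A}$-subgroup of order $2$, contradicting the uniqueness of $A$. The extra detail you supply about why $\underline{Ab}-\underline{Y}\in\mathcal{A}$ is exactly the justification the paper leaves implicit.
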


\begin{proof}
Obviously, $|Y|\leq |Ab|=p$. If $|Y|=1$ ($|Y|=p-1$, respectively), then $\langle Y \rangle$ ($\langle Ab\setminus Y \rangle$, respectively) is an $\mathcal{A}$-subgroup of order~$2$, a contradiction to the assumption that $A$ is a unique $\mathcal{A}$-subgroup.
\end{proof}

\begin{lemm}\label{l2}
Let $r$ be a prime and $s$ a positive integer. Suppose that $r^s$ divides $m$. Then there exists a unique $Y\in \mathcal{S}(\mathcal{A})_{Ab}$ such that $r^s$ does not divide $|Y|$. Moreover, $|Y|\equiv 1~\bmod r^s$. 
\end{lemm}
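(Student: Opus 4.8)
The plan is to analyze the multiplication of the basic set $Ab$-part by itself, or rather to use the action of the automorphism group on the coset $Ab$ together with a counting argument modulo $r^s$. First I would note that $\mathcal{A}_A = \cyc(K,A)$ with $|K|=m$, so every nontrivial basic set inside $A$ has size exactly $m$, and $Ab$ is a disjoint union of the basic sets $Y_1,\dots,Y_t$ lying in $\mathcal{S}(\mathcal{A})_{Ab}$, giving $|Y_1|+\dots+|Y_t| = p$. The key structural input is that $K$ acts on the set $Ab$ as well: since $A$ is an $\mathcal{A}$-subgroup and $\mathcal{A}_A$ is cyclotomic, one checks (using that $\aut(\mathcal{A})$ contains $A_r$ and the Cayley automorphisms realizing $K$) that each $Y_i$ is a union of $K$-orbits on $Ab$, where $K\leq\aut(A)$ acts on $Ab \cong A$ in the natural way. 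The orbit of $K$ on $Ab$ containing $b$ itself has length dividing $m$; every other orbit has length exactly $m$ if $K$ acts freely away from $b$, but in general a divisor of $m$. The cleanest route is: the fixed point $b$ lies in exactly one $Y_i$, call it $Y$, and $|Y| \equiv 1 \pmod{(\text{length of each nontrivial }K\text{-orbit})}$.

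More robustly, I would argue directly with the structure constants. Consider the basic set $X = Ab$, which is an $\mathcal{A}$-set (a union of basic sets). For the purpose of divisibility by $r^s$, I would work in the quotient: let $r^s \mid m$. Since $\mathcal{A}_A$ is cyclotomic over the cyclic group $A\cong C_p$, the subgroup $K_0 \leq K$ of index $r^s$ (unique, as $\aut(A)$ is cyclic) gives an $\mathcal{A}'$-section picture — actually, the cleaner statement is that the $K$-orbits on $Ab$ all have length divisible by $r^s$ except the one containing $b$, because $K$ has a unique subgroup of each order dividing $m$ and the stabilizer in $K$ of any point of $Ab$ other than $b$ is a proper subgroup whose index is then automatically divisible by... no — this needs the sharper fact that the stabilizer of a non-$b$ point is trivial. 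That holds: if $\kappa \in K$ fixes $ab \in Ab$, then writing the action, $\kappa$ fixes a nonidentity element's "shift," forcing $\kappa = \id$ since a nontrivial automorphism of $C_p$ fixes only $e$. Hence every $K$-orbit on $Ab \setminus \{b\}$ has length exactly $m$, and $\{b\}$ is a singleton orbit. Therefore each $Y_i$ has size a multiple of $m$, except exactly the one containing $b$, whose size is $1 + (\text{multiple of }m)$. Since $r^s \mid m$, this $Y$ is the unique basic set in $Ab$ with $r^s \nmid |Y|$, and $|Y| \equiv 1 \pmod{r^s}$.

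I would then double-check existence and uniqueness: the orbit decomposition $p = |Y| + \sum_{i\neq Y}|Y_i|$ with all terms after the first divisible by $m$ (hence by $r^s$) and $|Y| \equiv 1$, is consistent with $p = rq+1$ or $p$ Fermat (where $m \mid p-1$), so no contradiction arises and $Y$ genuinely exists — indeed $b \in Y$ is forced. Uniqueness is immediate since only one $Y_i$ contains the unique $K$-fixed point $b$. The main obstacle I anticipate is justifying rigorously that $K$ (equivalently, the relevant Cayley automorphisms of $\mathcal{A}_A$) extends to automorphisms of $\mathcal{A}$ acting on all of $G$ in a way that respects the partition of $Ab$ — i.e. that each basic set inside $Ab$ is genuinely $K$-invariant. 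This should follow from $\mathcal{A}_A = \cyc(K,A)$ together with the fact that $A$ is an $\mathcal{A}$-subgroup and $Ab$ its unique nontrivial coset, so that the action of the Cayley automorphism $\sigma_\kappa$ (for $\kappa \in K$) defined by $a \mapsto a^\kappa$, $b \mapsto b$ lies in $\aut(\mathcal{A})$; but one must verify $\sigma_\kappa$ fixes each basic set, which requires knowing $\mathcal{A}$ has such Cayley automorphisms, and I would establish this as a preliminary observation (or cite that over $C_p$ every $S$-ring is cyclotomic, Lemma~\ref{cyclprime}, and push the automorphisms up through the coset).
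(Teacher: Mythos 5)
Your argument hinges on the claim that every basic set of $\mathcal{A}$ inside $Ab$ is a union of orbits of $K$ acting on $Ab$ (with $b$ fixed and $ab\mapsto a^{\kappa}b$), which would follow if each $\sigma_\kappa$ were a Cayley automorphism of $\mathcal{A}$ and not merely of $\mathcal{A}_A$. You flag this yourself as ``the main obstacle,'' but it is not a gap that can be filled: the claim is false. Take $\mathcal{A}=\mathcal{A}(D)$ for a nontrivial difference set $D$ in $A$ (e.g.\ the Singer difference set in $C_{13}$, or a Paley difference set for $p\equiv 3\bmod 4$). Then $A$ is the unique nontrivial $\mathcal{A}$-subgroup, $\mathcal{A}_A$ has rank~$2$, so $m=p-1$ and $K=\aut(A)$; for any choice of fixed point in $Ab$, the $K$-orbits on $Ab$ have sizes $1$ and $p-1$, yet the basic sets $bD$ and $b(A\setminus D)$ have sizes $k$ and $p-k$ with $2\leq k\leq p-2$ (for $p=13$: sizes $4$ and $9$). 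So these basic sets are not unions of $K$-orbits, and no extension of $K$ to $\aut(G)$ preserves them. The statement of the lemma is still true in this example (e.g.\ $4\equiv 1\bmod 3$ and $9\equiv 1\bmod 4$), but your mechanism does not see why.

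The knowledge that $\mathcal{A}_A=\cyc(K,A)$ must instead be used only through the fact that every nontrivial basic set of $\mathcal{A}_A$ has size exactly $m$. Since every element of $Ab$ is an involution, each $Y\in\mathcal{S}(\mathcal{A})_{Ab}$ satisfies $Y=Y^{-1}$, and $\underline{Y_1}\,\underline{Y_2}$ is supported on $A$. Expanding $\underline{Y_1}\,\underline{Y_1}=|Y_1|e+\sum_X c^X_{Y_1Y_1}\underline{X}$ and $\underline{Y_1}\,\underline{Y_2}=\sum_X c^X_{Y_1Y_2}\underline{X}$ over the nontrivial $X\in\mathcal{S}(\mathcal{A}_A)$ and counting cardinalities gives $m\mid |Y_1|(|Y_1|-1)$ and $m\mid |Y_1||Y_2|$ for $Y_1\neq Y_2$. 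Combined with $\sum_Y|Y|=p$ and $r^s\nmid p$, these two divisibilities yield existence, uniqueness, and the congruence $|Y|\equiv 1\bmod r^s$. Your proposal gestures at ``arguing directly with the structure constants'' but never carries out this computation; as written, the proof does not go through.
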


\begin{proof}
Let $Y_1,Y_2\in \mathcal{S}(\mathcal{A})_{Ab}$ such that $Y_1\neq Y_2$. Clearly, $Y_1=Y_1^{-1}$ and $Y_2=Y_2^{-1}$. One can see that
$$\underline{Y_1}\cdot\underline{Y_1}=|Y_1|e+\sum_{\substack{X\in \mathcal{S}(\mathcal{A})_A \\ X\neq\{e\}}} c_{Y_1Y_1}^X \underline{X},$$
and 
$$\underline{Y_1}\cdot\underline{Y_2}=\sum_{\substack{X\in \mathcal{S}(\mathcal{A})_A \\ X\neq\{e\}}} c_{Y_1Y_2}^X \underline{X}.$$
Counting the number of elements in the left-hand side and the right-hand side of the above equalities and taking into account that $|X|=m$ for every nontrivial $X\in \mathcal{S}(\mathcal{A}_A)$, we obtain
\begin{equation}\label{divide}
m~|~|Y_1|(|Y_1|-1)~\text{and}~m~|~|Y_1||Y_2|.
\end{equation}

Observe that $\sum \limits_{Y\in \mathcal{S}(\mathcal{A})_{Ab}} |Y|=|Ab|=p$. This implies that there exists $Y\in \mathcal{S}(\mathcal{A})_{Ab}$ such that $r$ does not divide $|Y|$; for otherwise $r$ divides $p$ which is impossible. The second part of Eq.~\eqref{divide} yields that $|Y||Z|$ is divisible by $r^s$ and hence $|Z|$ is divisible by $r^s$ for every $Z\in \mathcal{S}(\mathcal{A})_{Ab}$ with $Z\neq Y$. From the first part of Eq.~\eqref{divide} it follows that $|Y|-1$ is divisible by $r^s$ or, equivalently, $|Y|\equiv 1~\bmod r^s$.
\end{proof}

\begin{lemm}\label{l3}
If $\GCD(|Y|,m)=1$ for some $Y\in \mathcal{S}(\mathcal{A})_{Ab}$, then $\mathcal{A}=\mathcal{A}_A\wr \mathcal{A}_{G/A}$.
\end{lemm}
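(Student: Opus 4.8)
The plan is to show that, under the hypothesis, $Ab$ is itself a basic set of $\mathcal{A}$. Since $A$ is a normal $\mathcal{A}$-subgroup, every basic set of $\mathcal{A}$ lies inside $A$ or inside $Ab$, and the only subset of the single coset $Ab$ that is a union of $A$-cosets is $Ab$ itself; thus "$Ab\in\mathcal{S}(\mathcal{A})$'' is exactly the assertion $\mathcal{A}=\mathcal{A}_A\wr\mathcal{A}_{G/A}$, and it is equivalent to $\rad(Y)=A$. Recall from the standing assumptions that the only $\mathcal{A}$-subgroups of $G$ are $\{e\}$, $A$, $G$, that $\mathcal{A}_A=\cyc(K,A)$ with $|K|=m$, and that every nontrivial basic set of $\mathcal{A}_A$ has size $m$; also every element of $Ab$ is an involution. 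Now $\rad(Y)$ is an $\mathcal{A}$-subgroup, and $\rad(Y)\leq A$ (if $g\in Ab$ then $gY\subseteq (Ab)(Ab)=A$, so $gY\neq Y$); hence $\rad(Y)\in\{\{e\},A\}$, and $\rad(Y)=A$ makes $Y$ an $A$-invariant subset of $Ab$, i.e.\ $Y=Ab$. So it remains to rule out $\rad(Y)=\{e\}$, which I assume for contradiction.

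The first key step is to feed the coprimality hypothesis into the structure-constant identity \eqref{triangle}. For a nontrivial basic set $X\in\mathcal{S}(\mathcal{A}_A)$, apply \eqref{triangle} to the triple $(X,Y,Y)$: using $Y=Y^{-1}$ and $|X|=m$ this reads $|Y|\,c^{Y}_{XY}=m\,c^{X^{-1}}_{YY}$. Since $\GCD(|Y|,m)=1$, it follows that $|Y|$ divides $c^{X^{-1}}_{YY}$; as $X$ ranges over all nontrivial basic sets of $\mathcal{A}_A$, so does $X^{-1}$, hence $|Y|\mid c^{X}_{YY}$ for every nontrivial $X\in\mathcal{S}(\mathcal{A}_A)$.

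The second key step is the opposite, elementary bound. For a fixed $a^i\in X$ the constant $c^X_{YY}$ counts the pairs $(y_1,y_2)\in Y\times Y$ with $y_1y_2=a^i$; writing $y_1=a^i y_2$ (legitimate since $y_2$ is an involution) gives $c^X_{YY}=|Y\cap a^{-i}Y|$. As $Y=Y^{-1}$, its left stabilizer equals $\rad(Y)=\{e\}$, so $a^{-i}Y\neq Y$ for $i\neq 0$, whence $c^X_{YY}=|Y\cap a^{-i}Y|<|Y|$. Combining the two steps forces $c^X_{YY}=0$ for every nontrivial $X\in\mathcal{S}(\mathcal{A}_A)$, so $\underline{Y}\,\underline{Y}=|Y|\,e$. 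But if $|Y|\geq 2$, distinct $y_1,y_2\in Y$ give $y_1y_2\neq e$, contributing a nonzero coefficient of $\underline{Y}\,\underline{Y}$ away from $e$, a contradiction; hence $|Y|\leq 1$, contradicting Lemma~\ref{easy}. Therefore $\rad(Y)=A$, so $Y=Ab\in\mathcal{S}(\mathcal{A})$ and $\mathcal{A}=\mathcal{A}_A\wr\mathcal{A}_{G/A}$.

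The only real content I expect is the confrontation of the two bounds on $c^X_{YY}$ in the middle paragraphs: the divisibility $|Y|\mid c^X_{YY}$ extracted from coprimality via \eqref{triangle}, against the trivial strict inequality $c^X_{YY}<|Y|$ valid once $\rad(Y)$ is trivial. Everything else — placing $\rad(Y)$ among the three $\mathcal{A}$-subgroups, and reading off $Y=Ab$ from $\rad(Y)=A$ — is routine.
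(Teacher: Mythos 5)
Your proof is correct, but it follows a genuinely different route from the paper's. The paper first invokes Lemma~\ref{l2} to conclude that every $Z\in\mathcal{S}(\mathcal{A})_{Ab}$ other than $Y$ has $|Z|$ divisible by $m$, and then analyses the product $\underline{X}\,\underline{Y}$ for a nontrivial $X\in\mathcal{S}(\mathcal{A}_A)$ meeting $(Yb)(Yb)^{-1}$: a cardinality count modulo $m$ forces $c_{XY}^{Y}=m$, hence $\underline{X}\,\underline{Y}=|X|\underline{Y}$, so $X\leq\rad(Y)$ and $Y=Ab$. You instead analyse $\underline{Y}\,\underline{Y}$, extracting $|Y|\mid c^{X}_{YY}$ from the identity \eqref{triangle} together with coprimality, and playing this against the strict bound $c^{X}_{YY}=|Y\cap a^{-i}Y|<|Y|$ that holds once $\rad(Y)$ is trivial (your identification of the left stabilizer with $\rad(Y)$ for symmetric $Y$, and the reduction of $\rad(Y)$ to $\{e\}$ or $A$, are both sound). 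Your version is more self-contained --- it bypasses Lemma~\ref{l2} entirely and uses only the triangle identity plus the fact that all elements of $Ab$ are involutions --- and it has a pleasant difference-set flavour: you show that $\underline{Y}^{2}=|Y|e$ is impossible for $|Y|\geq 2$. The paper's version, on the other hand, keeps the "structure constants divisible by $m$" theme that is reused almost verbatim in Lemma~\ref{4q1}, which is presumably why the author organized the argument around $\underline{X}\,\underline{Y}$ and Lemma~\ref{l2}. Both proofs rely on the same standing hypotheses (that $A$ is the unique nontrivial proper $\mathcal{A}$-subgroup, via Lemma~\ref{easy} in your case), so neither is more general in substance.
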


\begin{proof}
From Lemma~\ref{l2} it follows that $|Z|$ is divisible by~$m$ for every $Z\in \mathcal{S}(\mathcal{A})_{Ab}$ with $Z\neq Y$. Note that $|Y|\geq 2$ by Lemma~\ref{easy}. Let $y_1,y_2\in Yb$ such that $y_1\neq y_2$ and $X$ a nontrivial basic set of $\mathcal{A}_A$ such that $y_2y_1^{-1}\in X$. Since $X\subseteq A$ and $Y\subseteq Ab$, we have
\begin{equation}\label{combination}
\underline{X}\cdot\underline{Y}=c_{XY}^{Y}\underline{Y}+\sum_{\substack{Z\in \mathcal{S}(\mathcal{A})_{Ab},\\Z\neq Y}}c_{XY}^Z\underline{Z}.
\end{equation}
Counting the number of elements in the left-hand side and the right-hand side of the above equality and taking into account that $|X|=m$ and $|Z|$ is divisible by~$m$ for every $Z\in \mathcal{S}(\mathcal{A})_{Ab}$ distinct from $Y$, we conclude that
\begin{equation}\label{divide2}
m|Y|=c_{XY}^Y|Y|+\alpha m
\end{equation}
for some nonnegative integer $\alpha$. Since $|Y|$ and $m$ are coprime, the number $c_{XY}^Y$ is divisible by~$m$ due to Eq.~\eqref{divide2}. Observe that $c_{XY}^Y\geq 1$ because $y_2y_1^{-1}\in X$ and the element $y_2b$ enters the element $\underline{X}\cdot\underline{Y}$ with nonzero coefficient. So $c_{XY}^Y\geq m$. Now Eq.~\eqref{divide2} yields that $c_{XY}^Y=m$ and $\alpha=0$. Therefore from Eq.~\eqref{combination} it follows that $\underline{X}\cdot\underline{Y}=|X|\underline{Y}$ and hence $X\leq \rad(Y)$. Thus, $A=\langle X \rangle\leq \rad(Y)$ and $Y=Ab$. The latter means that $\mathcal{A}=\mathcal{A}_A\wr \mathcal{A}_{G/A}$.
\end{proof}

\begin{lemm}\label{primepower}
If $m$ is a prime power, then $\mathcal{A}=\mathcal{A}_A\wr \mathcal{A}_{G/A}$. 
\end{lemm}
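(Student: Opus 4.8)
\textbf{Proof proposal for Lemma~\ref{primepower}.}

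The plan is to reduce the prime-power case to the situation already handled in Lemma~\ref{l3}, i.e. to exhibit a basic set $Y\in\mathcal{S}(\mathcal{A})_{Ab}$ with $\GCD(|Y|,m)=1$, and then invoke Lemma~\ref{l3} to conclude that $\mathcal{A}=\mathcal{A}_A\wr\mathcal{A}_{G/A}$. Write $m=r^s$ for a prime $r$ and $s\geq 1$ (if $m=1$ then $\mathcal{A}_A$ is the full group ring $\mathbb{Z}A$, every nonidentity basic set inside $A$ has size~$1$, and one checks directly that $\mathcal{A}$ is the wreath product — or alternatively apply Lemma~\ref{l3} with any $Y$, since $\GCD(|Y|,1)=1$ automatically; so assume $s\geq 1$).

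First I would apply Lemma~\ref{l2} with this $r$ and this $s$: since $r^s=m$ divides $m$, there is a \emph{unique} $Y\in\mathcal{S}(\mathcal{A})_{Ab}$ such that $r^s$ does not divide $|Y|$, and moreover $|Y|\equiv 1\pmod{r^s}$. The key point is that $|Y|\equiv 1\pmod{r^s}$ forces $r\nmid|Y|$ (if $r\mid|Y|$ then $r\mid|Y|-1$ as well, contradicting $r\mid r^s$); hence $\GCD(|Y|,m)=\GCD(|Y|,r^s)=1$. This is exactly the hypothesis of Lemma~\ref{l3}, so $\mathcal{A}=\mathcal{A}_A\wr\mathcal{A}_{G/A}$, as desired.

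The only subtlety I anticipate is bookkeeping about the standing assumptions: Lemma~\ref{l2} and Lemma~\ref{l3} are stated under the running hypotheses of the section at this point, namely that $\mathcal{A}$ is imprimitive, that $G$ has no $\mathcal{A}$-subgroup of order~$2$, and that $A$ is the unique proper nontrivial $\mathcal{A}$-subgroup (so that $\mathcal{A}_A=\cyc(K,A)$ with $|K|=m$ and $Ab$ is an $\mathcal{A}$-set with $\mathcal{S}(\mathcal{A})_{Ab}$ nonempty). Under these hypotheses the argument above is immediate; there is essentially no computational content beyond the one-line congruence observation $r\nmid|Y|$. The "hard part," such as it is, was already done in Lemmas~\ref{l2} and~\ref{l3}; Lemma~\ref{primepower} is just the clean corollary obtained by specializing $r^s$ to be all of $m$.
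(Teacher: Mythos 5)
Your argument is correct and coincides with the paper's own proof: both cases ($m=1$ directly, $m=r^s>1$ via the unique $Y$ from Lemma~\ref{l2} with $|Y|\equiv 1\pmod{r^s}$, hence $\GCD(|Y|,m)=1$) reduce to Lemma~\ref{l3} exactly as in the text. The one-line justification that $|Y|\equiv 1\pmod{r^s}$ forces $r\nmid|Y|$ is a harmless elaboration of what the paper leaves implicit.
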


\begin{proof}
If $m=1$, then we are done by Lemma~\ref{l3}. If $m>1$, then there exists a unique $Y\in \mathcal{S}(\mathcal{A})_{Ab}$ such that $\GCD(|Y|,m)=1$ by Lemma~\ref{l2}. Again, we are done by Lemma~\ref{l3}.
\end{proof}

\begin{lemm}\label{4q1}
If $p=rq+1\geq 29$, where $q$ is a prime and $r\in\{2,4\}$, and $m\neq p-1$, then $\mathcal{A}=\mathcal{A}_A\wr \mathcal{A}_{G/A}$.
\end{lemm}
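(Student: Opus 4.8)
The plan is first to reduce the statement to a single genuinely hard case. Since $\mathcal{A}_A=\cyc(K,A)$ with $|K|=m$ dividing $p-1=rq$, and $m\neq p-1$ by hypothesis, $m$ is a proper divisor of $rq$. If $m=1$, then Lemma~\ref{l3} applied to any $Y\in\mathcal{S}(\mathcal{A})_{Ab}$ already gives $\mathcal{A}=\mathcal{A}_A\wr\mathcal{A}_{G/A}$; if $m$ is a prime power, the same conclusion follows from Lemma~\ref{primepower}. For $r=2$ this exhausts all proper divisors of $rq$ (which are $1,2,q$), and for $r=4$ it leaves only the value $m=2q$ (the proper divisors of $4q$ being $1,2,4,q,2q$). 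Hence it remains to treat $p=4q+1$ with $m=2q$; note that then $q\geq 3$, since $q=2$ would make $p=9$ non-prime.

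In this remaining case I would exploit the very concrete shape of $\mathcal{A}_A$. Here $K$ is the unique subgroup of index $2$ in $\aut(A)\cong(\mZ/p\mZ)^{\times}$, i.e.\ the group of quadratic residues, so under the natural identification $A\cong\mZ/p\mZ$ the two nonidentity basic sets $X_1,X_2$ of $\mathcal{A}_A$ are the set of nonzero quadratic residues and the set of quadratic nonresidues; they have size $2q$, satisfy $X_i=X_i^{-1}$ (as $-1$ is a square modulo $p\equiv 1\pmod{4}$), and each Cayley graph $\cay(A,X_i)$ is isomorphic to the Paley graph on $p$ vertices. Moreover every element of $Ab$ is an involution of $G$, so every basic set of $\mathcal{A}$ contained in $Ab$ is self-inverse; thus $\mathcal{A}$ is symmetric. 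I would then apply Lemma~\ref{l2} with the prime $2$ and with the prime $q$ to produce the unique $Y'\in\mathcal{S}(\mathcal{A})_{Ab}$ of odd size and the unique $Y''\in\mathcal{S}(\mathcal{A})_{Ab}$ with $q\nmid|Y''|$, and moreover $|Y''|\equiv 1\pmod{q}$. If $Y'=Y''$, then this common basic set has size coprime to $2q=m$, and Lemma~\ref{l3} yields $\mathcal{A}=\mathcal{A}_A\wr\mathcal{A}_{G/A}$. So the whole problem reduces to showing that $Y'\neq Y''$ is impossible. Assuming $Y'\neq Y''$, the size $|Y''|$ is even; combined with $|Y''|\equiv 1\pmod{q}$, $2\leq|Y''|\leq p$ and $|Y''|\notin\{1,p-1\}$ (Lemma~\ref{easy}), this forces $|Y''|\in\{q+1,\,3q+1\}$.

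The subcase $|Y''|=q+1$ is where I expect the real difficulty to lie, and I would resolve it by a divisibility squeeze followed by a clique-bound argument. Since $Y''=Y''^{-1}$, write $\underline{Y''}^{\,2}=(q+1)e+c_1\underline{X_1}+c_2\underline{X_2}$; comparing cardinalities gives $c_1+c_2=\tfrac{q+1}{2}$, and \eqref{triangle} applied to the triple $X_i,Y'',Y''$ gives $(q+1)\,c^{Y''}_{X_iY''}=2q\,c_i$, hence $(q+1)\mid 2q\,c_i$. Since $\GCD(q+1,q)=1$ and $q+1$ is even, this forces $\tfrac{q+1}{2}\mid c_i$, so $\{c_1,c_2\}=\{0,\tfrac{q+1}{2}\}$ and $\underline{Y''}^{\,2}=(q+1)e+\tfrac{q+1}{2}\underline{X_j}$ for one index $j$. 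Writing $Y''=\{a^ib:i\in I\}$ and $D=\{a^i:i\in I\}\subseteq A$, and using $(a^ib)(a^{i'}b)=a^{i-i'}$, one obtains $\underline{D}^{-1}\underline{D}=\underline{Y''}^{\,2}=(q+1)e+\tfrac{q+1}{2}\underline{X_j}$ in $\mZ A$; thus $d_1^{-1}d_2\in X_j$ for all $d_1\neq d_2$ in $D$, i.e.\ $D$ is a clique of size $q+1$ in $\cay(A,X_j)$, a graph isomorphic to the Paley graph on $p$ vertices. This is impossible: the clique number of that graph is at most $\sqrt{p}$ (a classical fact, following e.g.\ from the Hoffman ratio bound for strongly regular graphs), whereas $(q+1)^2=q^2+2q+1>4q+1=p$ for $q\geq 3$.

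In the subcase $|Y''|=3q+1$ the nonnegativity of structure constants alone suffices. Here $Y'$, being distinct from $Y''$, has size odd and divisible by $q$, so $|Y'|\in\{q,3q\}$, and since $|Y'|+|Y''|\leq|Ab|=p$ we get $|Y'|=q$. Put $d_i=c^{X_i}_{Y'Y''}$, so $\underline{Y'}\,\underline{Y''}=d_1\underline{X_1}+d_2\underline{X_2}$ (there is no $e$-term, as $Y'\cap Y''=\varnothing$); then $d_1+d_2=\tfrac{3q+1}{2}$, while \eqref{triangle} applied with $X=Y'$, $Y=Y''$, $Z=X_i$ gives $2q\,d_i=(3q+1)\,c^{Y''}_{X_iY'}$, so $(3q+1)\mid 2q\,d_i$; since $\GCD(3q+1,q)=1$ and $3q+1$ is even, $\tfrac{3q+1}{2}\mid d_i$, whence $\{d_1,d_2\}=\{0,\tfrac{3q+1}{2}\}$. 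On the other hand, \eqref{sum1} with $X=Y'$ and $Z=X_i$ gives $\sum_{Y\in\mathcal{S}(\mathcal{A})_{Ab}}c^{X_i}_{Y'Y}=|Y'|=q$, so by nonnegativity $c^{X_i}_{Y'Y'}\leq q-c^{X_i}_{Y'Y''}=q-d_i$; taking the index $i$ with $d_i=\tfrac{3q+1}{2}$ yields $c^{X_i}_{Y'Y'}\leq q-\tfrac{3q+1}{2}<0$, a contradiction. Therefore $Y'\neq Y''$ cannot occur, which completes the argument. The crux is thus the case $p=4q+1$, $m=2q$, and within it the subcase $|Y''|=q+1$, where one must recognise the set $D$ as a clique in a Paley graph and invoke the clique bound; the divisibility consequences of \eqref{triangle} are the common engine driving both subcases.
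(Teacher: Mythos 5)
Your proposal is correct, and it shares the paper's skeleton: reduce to $r=4$, $m=2q$ via Lemma~\ref{primepower}, apply Lemma~\ref{l2} with the primes $2$ and $q$ to get the two distinguished basic sets, dispose of the coincidence case by Lemma~\ref{l3}, and pin down $|Y''|\in\{q+1,3q+1\}$. Where you diverge is in killing the two subcases. For $|Y''|=3q+1$ the paper shows $c^{Y_1}_{Y_1X_1}+c^{Y_1}_{Y_1X_2}=3q$ with each summand divisible by $q$, forces one of them to equal $|X_i|=2q$, and concludes that $\rad(Y_1)$ is nontrivial; your route through $\underline{Y'}\,\underline{Y''}$ and a negative structure constant is equally elementary and equally valid (indeed $d_i=\tfrac{3q+1}{2}>q=|Y'|$ already contradicts~\eqref{sum1} directly). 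For $|Y''|=q+1$ the two arguments are genuinely different: the paper stays inside the structure-constant calculus, computing $|Y_2|=3q$ and again producing a nontrivial radical, whereas you identify $X_1,X_2$ with the quadratic residues and nonresidues (legitimate, since $K$ is the unique index-$2$ subgroup of the cyclic group $\aut(A)$), recognise $D=Y''b$ as a $(q+1)$-clique in a Paley graph, and invoke the classical bound $\omega(P_p)\le\sqrt{p}$. This buys you a shorter, more conceptual contradiction that never needs to determine the remaining basic sets of $\mathcal{A}$ inside $Ab$ (the paper's computation of $|Y_2|=3q$ via~\eqref{sum2} tacitly treats $Y_1,Y_2$ as the only basic sets in $Ab$, a point your argument simply sidesteps), at the price of importing an external fact about Paley graphs (clique--coclique plus self-complementarity, or the Delsarte--Hoffman bound) that the paper's self-contained $S$-ring computation does not need. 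Both proofs are sound for all odd primes $q\ge 3$, including $q=3$, as required.
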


\begin{proof}
The condition of the lemma implies that $q$ is odd. Note that $m$ divides $p-1$. If $m\in\{1,2,4,q\}$, then we are done by Lemma~\ref{primepower}. Since $m\neq p-1$, it remains to consider the case $r=4$ and $m=2q$. In this case, $\rk(\mathcal{A}_A)=3$. Let $X_1$ and $X_2$ be nontrivial basic sets of $\mathcal{A}_A$. Clearly, $|X_1|=|X_2|=2q$. Since $m$ is even, $K$ contains an automorphism of order~$2$ and consequently $X_1=X_1^{-1}$ and $X_2=X_2^{-1}$. Thus, $\mathcal{A}$ is symmetric and hence commutative.

Lemma~\ref{l2} implies that there exist a unique $Y_1\in \mathcal{S}(\mathcal{A})_{Ab}$ such that $|Y_1|$ is not divisible by~$q$ and a unique $Y_2\in \mathcal{S}(\mathcal{A})_{Ab}$ such that $|Y_2|$ is not divisible by~$2$. If $Y_1=Y_2$, then $\GCD(|Y_1|,m)=1$ and we are done by Lemma~\ref{l3}.

Suppose that $Y_1\neq Y_2$. From Lemma~\ref{l2} it follows that $|Y_1|\equiv 1\bmod q$. Since $Y_2$ is a unique basic set from $\mathcal{S}(\mathcal{A})_{Ab}$ of odd cardinality and $Y_1\neq Y_2$, we conclude that $|Y_1|$ is divisible by~$2$. Therefore
$$|Y_1|\in\{q+1,3q+1\}.$$

Note that
\begin{equation}\label{zero}
c_{Y_1Z}^{T}=0
\end{equation}
whenever $Z,T\in \mathcal{S}(\mathcal{A}_A)$ or $Z,T\in \mathcal{S}(\mathcal{A})_{Ab}$. So by Eq.~\eqref{sum10}, we have
$$|Y_1|=c_{Y_1\{e\}}^{Y_1}+c_{Y_1X_1}^{Y_1}+c_{Y_1X_2}^{Y_1}=1+c_{Y_1X_1}^{Y_1}+c_{Y_1X_2}^{Y_1}.$$
Therefore 
\begin{equation}\label{y1sum}
c_{Y_1X_1}^{Y_1}+c_{Y_1X_2}^{Y_1}=|Y_1|-1.
\end{equation}
Eq.~\eqref{triangle} and symmetry of $\mathcal{A}$ imply that
$$c_{Y_1X_1}^{Y_1}|Y_1|=c_{Y_1Y_1}^{X_1}|X_1|~\text{and}~c_{Y_1X_2}^{Y_1}|Y_1|=c_{Y_1Y_1}^{X_2}|X_2|.$$
Since $|Y_1|$ is not divisible by~$q$ and $|X_1|=|X_2|=2q$, the above equalities yield that $c_{Y_1X_1}^{Y_1}$ and $c_{Y_1X_2}^{Y_1}$ are divisible by~$q$. If $|Y_1|=3q+1$, then $c_{Y_1X_1}^{Y_1}\geq 2q$ or $c_{Y_1X_2}^{Y_1}\geq 2q$ by Eq.~\eqref{y1sum}. The latter means that $Y_1X_1=Y_1$ or $Y_1X_2=Y_1$. In both cases, $\rad(Y_1)$ is nontrivial and hence $Y_1=Ab$, a contradiction to $Y_1\neq Y_2$.

Now suppose that $|Y_1|=q+1$. Then Eq.~\eqref{y1sum} implies that $c_{Y_1X_1}^{Y_1}=0$ or $c_{Y_1X_2}^{Y_1}=0$. Without loss of generality, let $c_{Y_1X_1}^{Y_1}=c_{X_1Y_1}^{Y_1}=0$. Put $Z_1=Y_1b\subseteq A$. Since $c_{X_1Y_1}^{Y_1}=0$, there is no an element from $Z_1$ which enters the element $\underline{X_1}\cdot \underline{Z_1}$. Therefore 
\begin{equation}\label{inter1}
|Z_1\cap X_1z|=0 
\end{equation}
for every $z\in Z_1$. The Cayley graph $\cay(A,X_1)$ is a Paley graph with parameters $(4q+1,2q,q-1,q)$. Due to Eq.~\eqref{inter1}, any two elements from $Z_1$ are non-adjacent in $\cay(A,X_1)$. This yields that  
\begin{equation}\label{inter2}
|X_1z\cap X_1z^\prime|=q
\end{equation}
for all $z,z^\prime\in Z_1$. 
Clearly, $|Z_1|=|Y_1|=q+1\geq 8$. Let $z,z^\prime,z^{\prime\prime}\in Z_1$ be pairwise distinct. Since $p\geq 29$,  
\begin{equation}\label{inter3}
|X_1z\cap X_1z^\prime\cap X_1z^{\prime\prime}|\geq 1.
\end{equation}
by~\cite[Corollary~0.2]{BM} stating that in this case any three distinct vertices in a Paley graph have a common neighbor. Using Eqs.~\eqref{inter1},~\eqref{inter2},~\eqref{inter3}, we obtain
$$|Z_1\cup X_1z\cup X_1z^\prime\cup X_1z^{\prime\prime}|=|Z_1|+|X_1z|+|X_1z^\prime|+|X_1z^{\prime\prime}|-|Z_1\cap X_1z|-|Z_1\cap X_1z^\prime|-|Z_1\cap X_1z^{\prime\prime}|-$$
$$-|X_1z\cap X_1z^\prime|-|X_1z\cap X_1z^\prime|-|X_1z^\prime\cap X_1z^{\prime\prime}|+|Z_1\cap X_1z\cap X_1z^\prime|+|Z_1\cap X_1z\cap X_1z^{\prime\prime}|+$$
$$+|Z_1\cap X_1z^\prime\cap X_1z^{\prime\prime}|+|X_1z\cap X_1z^\prime\cap X_1z^{\prime\prime}|-|Z_1\cap X_1z\cap X_1z^\prime\cap X_1z^{\prime\prime}|\geq$$
$$\geq q+1+6q-3q+1=4q+2>|A|,$$
a contradiction to $Z_1\cup X_1z\cup X_1z^\prime\cup X_1z^{\prime\prime}\subseteq A$.
\end{proof}

\begin{lemm}\label{wreathiso}
If $\mathcal{A}=\mathcal{A}_A\wr \mathcal{A}_{G/A}$, then $\mathcal{A}$ is isomorphic to an $S$-ring over $C_{2p}$.
\end{lemm}

\begin{proof}
Let $\sigma\in \aut(G)$ such that $a^\sigma=a$ and $b^\sigma=ab$. It is easy to check that the group $R=\langle b_r\sigma \rangle$ is a regular subgroup of $\aut(\mathcal{A})$ isomorphic to $C_{2p}$.
\end{proof}

\begin{proof}[Proof of Theorem~\ref{classification}]
The theorem for $p\in\{5,7,11,13,17,23\}$ follows from computer calculations (see~\cite{Ziv}). Further, we assume that $p\geq 29$.

If $\mathcal{A}$ is primitive, then $\rk(\mathcal{A})=2$ by Lemma~\ref{l0} and Statement~$(1)$ of Theorem~\ref{classification} holds. Suppose that $\mathcal{A}$ is imprimitive. If there is an $\mathcal{A}$-subgroup $L$ of order~$2$, then $\mathcal{A}$ is cyclotomic or $\mathcal{A}$ is isomorphic to an $S$-ring over $C_{2p}$ by Lemma~\ref{l1} and Statement~$(2)$ or~$(3)$ of Theorem~\ref{classification} holds. 

Now suppose that there is no an $\mathcal{A}$-subgroup of order~$2$. Since $\mathcal{A}$ is imprimitive, $A$ is a unique $\mathcal{A}$-subgroup. In view of Lemma~\ref{wreathiso}, to prove that Statement~$(3)$ of Theorem~\ref{classification} holds for $\mathcal{A}$, it suffices to verify that $\mathcal{A}=\mathcal{A}_A\wr \mathcal{A}_{G/A}$. If $p$ is a Fermat prime, then $m$ is a $2$-power and $\mathcal{A}=\mathcal{A}_A\wr \mathcal{A}_{G/A}$ by Lemma~\ref{primepower}. Let $p=rq+1$, where $q$ is a prime and $r\in\{2,4\}$. We may assume that $q$ is odd because otherwise $p$ is a Fermat prime and we are done. If $m\neq p-1$, then $\mathcal{A}=\mathcal{A}_A\wr \mathcal{A}_{G/A}$ by Lemma~\ref{4q1}. 

Let $m=p-1$. Then $\rk(\mathcal{A}_A)=2$ and $X=A^\#$ is a basic set of $\mathcal{A}_A$. Let $Y\in \mathcal{S}(\mathcal{A})_{Ab}$. If $|Y|=p$, then $Y=Ab$ and $\mathcal{A}=\mathcal{A}_A\wr \mathcal{A}_{G/A}$. So in view of Lemma~\ref{easy}, we may assume that $2\leq |Y|\leq p-2$. Since $\mathcal{A}$ is an $S$-ring,
$$\underline{Y}\cdot\underline{Y}=\underline{Yb}\cdot (\underline{Yb})^{-1}=|Y|e+\lambda\underline{A}^\#$$
for some positive integer $\lambda$. Therefore $D=Yb$ is a nontrivial difference set in~$A$. If $p=4q+1$, we obtain a contradiction to Theorem~\ref{main4}. Suppose that $p=2q+1$. Due to the above discussion and Lemma~\ref{dif2q}, every basic set of $\mathcal{A}$ outside $A$ has size~$q$ or~$q+1$. Therefore $\rk(\mathcal{A})=4$ and hence $\mathcal{A}=\mathcal{A}(D)$. Thus, Statement~$(4)$ of Theorem~\ref{classification} holds.
\end{proof}

\begin{proof}[Proof of Theorem~\ref{main3}]
Let $p$ be a Fermat prime or $p=4q+1$, where $q$ is prime. Let us prove that $\mathcal{A}$ is schurian. One of the statements of Theorem~\ref{classification} holds for $\mathcal{A}$. If Statement~$(1)$ or~$(2)$ holds, then $\mathcal{A}$ is obviously schurian. If Statement~$(3)$ holds, then  $\mathcal{A}$ is schurian because $C_{2p}$ is a Schur group by~\cite[Theorem~1.1]{EKP1}. If Statement~$(4)$ holds, then $p\in\{5,13\}$ and $\mathcal{A}$ is schurian by computer calculations~\cite{Ziv}.
\end{proof}

\section{Further discussion}

\subsection{$S$-rings from relative difference sets}

As it was mentioned in the introduction, the only non-Schur dihedral groups of order at most~$62$ which satisfy all the conditions from Theorem~\ref{main2} and Corollary~\ref{conditions} are $D_{52}$ and $D_{60}$. Nonschurian $S$-rings over these groups arise from relative difference sets. Let us discuss their constructions.

Let $A$ be a group and $A_0$ a proper nontrivial subgroup of $A$ of order and index~$n$ and $m$, respectively. A subset $D\subseteq A$ of size~$k$ is called a \emph{relative difference set} (\emph{RDS} for short) with forbidden subgroup~$A_0$ if 
\begin{equation}\label{rds}
\underline{D}\cdot \underline{D}^{-1}=ke+\lambda(\underline{A}-\underline{A_0})
\end{equation}
for some positive integer $\lambda$. The numbers $(m,n,k,\lambda)$ are called the \emph{parameters} of~$D$. Due to the definition of RDS, we have $|D\cap A_0a|\leq 1$ for every $a\in A$. Suppose that $A$ is abelian, $G$ is a generalized dihedral group associated with $A$, and $b\in G\setminus A$. Consider the partition $\mathcal{S}$ of $G$ into the following sets:
$$X_a=\{a\},~X_0=A\setminus A_0,~Y_a=Dab,~Y_0=(A\setminus DA_0)b,~a\in A_0.$$

\begin{lemm}\label{rdsring}
The $\mathbb{Z}$-module $\mathcal{A}=\mathcal{A}(D)=\Span_{\mathbb{Z}}\{\underline{X}:~X\in \mathcal{S}\}$ is an $S$-ring over $G$.
\end{lemm}

\begin{proof}
One can see that every set from $\mathcal{S}$ outside $A_0$ is inverse-closed and hence $X^{-1}\in \mathcal{S}$ for every $X\in \mathcal{S}$. Since $Y_0$ is a complement to the union of all other sets from $\mathcal{S}$, it suffices to verify that $\underline{X}\cdot \underline{Y}\in \mathcal{A}$ for all $X,Y\in \{X_a,X_0,Y_a:~a\in A_0\}$. A straightforward computation using Eq.~\eqref{rds} in the group ring $\mathbb{Z}G$ implies that
$$\underline{X_{a_1}}\cdot\underline{X_{a_2}}=\underline{X_{a_1a_2}},~\underline{X_{a}}\cdot\underline{X_{0}}=\underline{X_{0}}\cdot\underline{X_{a}}=\underline{X_0},~\underline{X_{a_1}}\cdot\underline{Y_{a_2}}=\underline{Y_{a_1a_2}},~\underline{Y_{a_2}}\cdot\underline{X_{a_1}}=\underline{Y_{a_1^{-1}a_2}},$$
$$\underline{X_0}^2=(mn-2n)\underline{X_0}+(mn-n)\underline{A_0},~\underline{X_0}\cdot\underline{Y_a}=\underline{Y_a}\cdot \underline{X_0}=k\underline{Ab}-\underline{Y_0},$$
$$\underline{Y_{a_1}}\cdot\underline{Y_{a_2}}=k\underline{X_{a_1a_2^{-1}}}+\lambda \underline{X_0},$$
for all $a,a_1,a_2\in A_0$. 
\end{proof}

From~\cite[Theorem~1.2]{Arasu} it follows that there exists an RDS $D$ with parameters 
$$\left(\frac{q^{d}-1}{q-1},n,q^{d-1},\frac{q^{d-2}(q-1)}{n}\right)$$
in a cyclic group, where $q$ is a prime power and $d,n\geq 2$ are integers, if and only if $n$ divides~$q-1$ when $q$ is odd or $d$ is even and $n$ divides~$2(q-1)$ when $q$ is even and $d$ is odd. Therefore one can construct an $S$-ring $\mathcal{A}(D)$ over a dihedral group of order~$2l$ for each $l$ of the form~$l=\frac{(q^{d+1}-1)n}{q-1}$, where $q$, $d$, and $n$ satisfy the above condition, in particular for $l=15$ and $l=26$. The computational results~\cite{Ziv} imply the following: $(1)$ $\mathcal{A}(D)$ is schurian if $l<26$; $(2)$ if $l=26$, then $\mathcal{A}(D)$ is a unique nonschurian $S$-ring over $D_{52}$; $(3)$ a nonschurian $S$-ring over $D_{60}$ can be obtained as a generalized wreath product of a schurian $S$-ring $\mathcal{A}(D)$ over $D_{30}$ and some other $S$-ring. It would be interesting to study a schurity of $\mathcal{A}(D)$ in a general case. Finally, it should be mentioned that the Cayley graph $\cay(G,Db)$, whose connection set is a basic set of $\mathcal{A}(D)$, is exactly a graph from~\cite[Proposition~5.1]{QDK} appeared in the context of studying $2$-arc transitive dihedrants (see also~\cite{Jin}).

\subsection{More on schurity of $D_{2p}$}

We do not know whether it is possible to extend Theorem~\ref{main3} to other primes~$p$. Recall that if $p\equiv 3\bmod 4$ and $p>11$, then $D_{2p}$ is not Schur by Lemma~\ref{3mod}. In this case, there is a Paley difference $D$ set in $C_p$ and the $S$-ring $\mathcal{A}(D)$ is nonschurian whenever $p>11$. However, $D_{2p}$ can be not Schur even if $p\equiv 1\bmod 4$ which follows from the lemma below.

\begin{lemm}\label{nonschur2}
If $p$ is a prime of the form $p=4t^2+1$ or $p=4t^2+9$, where $t\geq 3$ is an odd integer, then $D_{2p}$ is not Schur.
\end{lemm}

\begin{proof}
From~\cite[Theorem~VI.8.11(b)]{BJL} it follows that there is a biquadric residue difference set $D$ in a cyclic group of order~$p$ with parameters~$(v,k,\lambda)=(4t^2+1,t^2,\frac{t^2-1}{4})$ if $p=4t^2+1$ and~$(v,k,\lambda)=(4t^2+9,t^2+3,\frac{t^2+3}{4})$ if $p=4t^2+9$. Suppose that $p=4t^2+1$. Then $k=t^2$ divides $v-1=4t^2$. If $t>3$, then $1+\sqrt{k}=1+t>\frac{v-1}{k}=4$. So the design corresponding to~$D$ is not $2$-transitive by~\cite[Theorem~8.3]{Kantor}. Therefore the $S$-ring $\mathcal{A}(D)$ over $D_{2p}$ constructed from~$D$ as in Section~$3$ is nonschurian by~\cite[Lemma~5.2]{PV} and hence $D_{2p}$ is not Schur. If $t=3$, then it is easy to see that $v=4t^2+1=37$ can not be presented in the form $\frac{q^{d+1}-1}{q-1}$ for some prime power $q$  and $d\geq 2$ and hence $D_{2p}$ is not Schur by Lemma~\ref{schurdiff}.

Now let $p=4t^2+9$. Assume that $D_{2p}$ is Schur. Then Lemma~\ref{schurdiff} implies that $D$ must have parameters 
$$\left(\frac{q^{d+1}-1}{q-1},\frac{q^d-1}{q-1},\frac{q^{d-1}-1}{q-1}\right),$$
where $q$ is a prime power and $d\geq 2$. The straightforward computation yields that
$$\frac{k}{\lambda}=4=q+\frac{q-1}{q^{d-1}-1}.$$
Therefore $q=3$, $d=2$, and $t=1$, a contradiction to $t\geq 3$.  
\end{proof}

Observe that if $p=4t^2+9$, then $p-1$ can be of the form $p-1=4m$, where $m$ is odd and square-free. For example, $p=3373=4\cdot 29^2+9=4\cdot 3 \cdot 281+1$. It would be interesting to study schurity of $D_{2p}$ for all primes~$p$ such that $p\equiv 1\bmod 4$.

\subsection{Schurity of Frobenius groups}

Recall that a Frobenius group $C_p\rtimes C_{p-1}$ is not Schur for every prime $p\geq 7$ by Corollary~\ref{frobp}. It seems interesting to ask which Frobenius groups are Schur. Denote the set of all integers of one of the forms
$$p^k,~pq^k,2pq^k,~pqr,~2pqr,$$   
where $p$, $q$, and $r$ are primes and $k\geq 1$, by $\mathcal{P}$ and the set of all odd integers from $\mathcal{P}$ by $\mathcal{P}_0$. The proposition below provides several restrictions on the structure of a Frobenius Schur group.

\begin{prop}\label{frobenius}
Let $G$ be a Frobenius Schur group. Then $G$ is isomorphic to one of the following groups:
\begin{enumerate}

\tm{1} $C_{n}\rtimes C_{m}$, where $n\in \mathcal{P}_0$ and $m\in \mathcal{P}$;

\tm{2} $(E_4\times C_{p^k})\rtimes C_3$, $(E_4\times C_{pq})\rtimes C_3$, where $p$ and $q$ are odd primes and $k\geq 1$;

\tm{3} $E_4\rtimes C_3$, $E_{27}\rtimes C_{13}$, $E_{32}\rtimes C_{31}$.

\end{enumerate}

\end{prop}

\begin{proof}
We start the proof with the lemma below which follows from computational results~\cite{Ziv} and~\cite[Lemma~3.7]{PV}.

\begin{lemm}\label{small}
The Frobenius groups $E_8\rtimes C_7$, $E_9\rtimes C_2$, $E_{16}\rtimes C_3$, $E_{16}\rtimes C_5$, $E_{27}\rtimes C_2$ are not Schur.
\end{lemm}

Let $G=N\rtimes A$ be a Frobenius Schur group, where $N$ and $A$ are the Frobenius kernel and complement of $G$, respectively. From~\cite[Theorem~1.3]{PV} it follows that $G$ is metabelian and hence the derived subgroup $G^\prime$ is abelian. Since $G$ is Frobenius, $G^\prime=N\rtimes A^\prime$. If $A^\prime$ is nontrivial, then $G^\prime$ is also Frobenius and, in particular, nonabelian. So $A^\prime$ is trivial and consequently $N$ and $A$ are abelian. By Lemma~\ref{schursection}, the groups $N$ and $A$ are Schur. Due to~\cite[Corollary~6.17]{Isaacs}, $A$ is cyclic. Therefore~\cite[Theorem~1.1]{EKP1} yields the following

\begin{lemm}\label{complcycl}
The Frobenius complement $A$ is a cyclic group of order~$n\in \mathcal{P}$. 
\end{lemm}

Observe that $N$ does not have a characteristic subgroup of order~$2$. Together with~\cite[Theorem~3.3]{PV}, this implies the following

\begin{lemm}\label{abelschur}
The Frobenius kernel $N$ is isomorphic to one of the groups below:
\begin{enumerate}
\tm{1} a cyclic group of order $n\in \mathcal{P}_0$;

\tm{2} $E_n$, where $n\in\{4,8,9,16,27,32\}$;

\tm{3} $C_3\times C_{3^k}$, $E_9\times C_p$, where $k\geq 2$ and $p\neq 2$ is a prime;

\tm{4} $E_4 \times C_{p^k}$, $E_4\times C_{pq}$, $E_{16} \times C_p$, where $k\geq 1$ and $p\neq 2$ and $q$ are primes. 

\end{enumerate} 

\end{lemm}

If Statement~$(1)$ of Lemma~\ref{abelschur} holds for $N$, then Statement~$(1)$ of Proposition~\ref{frobenius} holds for $G$. Further, we assume that $N$ is noncyclic and hence one of Statements~$(2)$-$(4)$ of Lemma~\ref{abelschur} holds for $N$.

Suppose that $|N|$ is even. Then $P=\Syl_2(N)$ is nontrivial and $|A|$ is odd. From Lemma~\ref{abelschur} it follows that $P$ is isomorphic to one of the groups
$$E_4,~E_8,~E_{16},~E_{32}.$$
Since $P$ is characteristic in $N$, the group $H=P\rtimes A$ is Frobenius. By Lemma~\ref{schursection}, the group $H$ and each of its subgroups are Schur. Clearly, $|A|$ divides $|P|-1$. If $P\cong E_8$, then $H\cong E_8\rtimes C_7$, a contradiction to Lemma~\ref{small}. If $P\cong E_{16}$, then $H$ is isomorphic to one of the groups $E_{16}\rtimes C_3$, $E_{16}\rtimes C_5$, $E_{16}\rtimes C_{15}$. In all cases, $H$ has a subgroup isomorphic to $E_{16}\rtimes C_3$ or $E_{16}\rtimes C_5$, a contradiction to Lemma~\ref{small}. 

If $P\cong E_4$, then $N$ is isomorphic to one of the groups $E_4$, $E_4 \times C_{p^k}$, $E_4\times C_{pq}$ by Lemma~\ref{abelschur} and $A\cong C_3$. So Statement~$(2)$ or~$(3)$ of Proposition~\ref{frobenius} holds for $G$. If $P\cong E_{32}$, then $N\cong E_{32}$ by Lemma~\ref{abelschur} and $A\cong C_{31}$. In this case, Statement~$(3)$ of Proposition~\ref{frobenius} holds for $G$.

Now suppose that $|N|$ is odd. Then Lemma~\ref{abelschur} yields $N$ has a characteristic subgroup isomorphic to $E_9$ or $N\cong E_{27}$. In the first case, $G$ has a subgroup isomorphic to $E_9\rtimes C_2$ which is not Schur by Lemma~\ref{small}, a contradiction to Lemma~\ref{schursection}. In the second one, $A$ is isomorphic to one of the groups $C_2$, $C_{13}$, $C_{26}$ because $|A|$ divides $|N|-1=26$. If $A\cong C_2$ or $A\cong C_{26}$, then $G$ has a subgroup isomorphic to $E_{27}\rtimes C_2$ which is not Schur by Lemma~\ref{small}, a contradiction to Lemma~\ref{schursection}. If $A\cong C_{13}$, then $G\cong E_{27}\rtimes C_{13}$ and Statement~$(3)$ of Proposition~\ref{frobenius} holds for $G$. 
\end{proof}

It seems reasonable to ask about schurity of groups from Proposition~\ref{frobenius}. Computer calculations~\cite{Ziv} imply that all Frobenius groups of order at most~$62$ satisfying the conditions from Proposition~\ref{frobenius} are Schur except for the Frobenius group $C_{19}\rtimes C_3$ which is not Schur. A unique nonschurian $S$-ring over this group is a primitive $S$-ring of rank~$3$ arising from a strongly regular Cayley graph which is a block graph of a Steiner triple system. A construction of this $S$-ring is described in detail in~\cite{PR2}.

\end{document}